\newtheoremstyle{theoremstyle}
{\topsep}
{\topsep}
{\itshape}
{-0mm}
{\bfseries}
{.}
{0.5em}
{}
\theoremstyle{theoremstyle}
\newtheorem{theorem}{Theorem}
\newtheorem{lemma}[theorem]{Lemma}
\newtheorem{proposition}[theorem]{Proposition}
\newtheorem*{theorem*}{Theorem}
\newtheorem*{lemma*}{Lemma}
\newtheorem*{corollary*}{Corollary}
\newtheorem*{proposition*}{Proposition}
\newtheorem*{defandprop*}{Definition and Proposition}
\theoremstyle{theoremstyle}
\newtheorem*{definition*}{Definition}
\newtheorem*{notation*}{Notation}
\newtheoremstyle{proofstyle}
{0pt}
{5pt}
{}
{-0mm}
{\itshape}
{}
{0.5em}
{}
\theoremstyle{proofstyle}
\newtheoremstyle{remarkstyle}
{0pt}
{5pt}
{}
{}
{\sffamily}
{.}
{0.5em}
{}
\theoremstyle{remarkstyle}
\newtheorem*{remark}{Remark}
\newcommand{\N}{{\mathbb N}}
\newcommand{\Z}{{\mathbb Z}}
\renewcommand{\P}{{\mathbb P}}
\newcommand{\E}{{\mathbb E}}
\newcommand{\ssup}[1] {{{\scriptscriptstyle{({#1}})}}}
\begin{document}
\title{Stable limit laws for the parabolic Anderson model between quenched and annealed behaviour\renewcommand{\thefootnote}{\arabic{footnote}}} 

\author{\renewcommand{\thefootnote}{\arabic{footnote}}
{\sc J\"urgen G\"artner}
\footnotemark[1]
\\
\renewcommand{\thefootnote}{\arabic{footnote}}
{\sc Adrian Schnitzler}
\footnotemark[1]
}
\footnotetext[1]{
Institut f\"ur Mathematik, Technische Universit\"at Berlin,
Stra{\ss}e des 17.\ Juni 136, 10623 Berlin, Germany,
{\sl jg@math.tu-berlin.de}, 
{\sl schnitzler@math.tu-berlin.de}
}
\footnotetext[2]{
The work was supported by the DFG International Research Training Group {\it Stochastic Models of Complex Processes
}}

\date{\today}

\maketitle

\begin{abstract}
We consider the solution to the parabolic Anderson model with homogeneous initial condition in large time-dependent boxes. We derive stable limit theorems, ranging over all possible scaling parameters, for the rescaled sum over the solution depending on the growth rate of the boxes. Furthermore, we give sufficient conditions for a strong law of large numbers. 

\vskip 1truecm
\noindent
{\it AMS 2010 Subject Classification.} Primary 60K37, 82C44; Secondary 60H25, 60F05.\\

\noindent
{\it Key words and phrases.} Parabolic Anderson model, stable limit laws, strong law of large numbers.\\
\end{abstract}

\section{Introduction}
\subsection{The problem}
The parabolic Anderson model (PAM) is the heat equation on the lattice with a random potential, given by
\begin{eqnarray}\label{AWPe}
\begin{cases}
\frac{\partial}{\partial t}u(t,x) = \kappa \Delta u(t,x)+ \xi(x) u(t,x),
 \qquad &(t,x)\in (0,\infty)\times\mathbb{Z}^{d},\\ 
 u(0,x)=u_{0}(x), \qquad &x\in \mathbb{Z}^{d},
\end{cases}
\end{eqnarray}
where $\kappa>0$\index{$\kappa$} denotes a diffusion constant, $u_{0}$ a nonnegative function, and $\Delta$ the discrete Laplacian, defined by	
	\[
\Delta f(x) := \sum\limits_{\substack{y\in\mathbb{Z}^{d}:\\|x-y|_1=1}}\left[f(y)-f(x)\right],\qquad x\in \mathbb{Z}^{d},\, f\colon\mathbb{Z}^{d}\rightarrow \mathbb{R}.
\]
Furthermore, $\xi:=\left\{\xi(x),x\in\Z^d\right\}$ is an i.i.d.\ random potential. We will stick in this paper to the homogeneous initial condition $u_0\equiv1$.\\
The solution $u$ depends on two effects. The Laplacian tends to make it flat whereas the potential causes the occurrence of small regions where almost all mass of the system is located. The latter effect is called {\em intermittency}. It turns out that it becomes the more dominant the more heavy tailed the potential tails are.\\
Basically, there are two ways of looking at the solution. On the one hand one can pick one realisation of the potential field and consider the almost sure behaviour of $u$. This is the so called {\em quenched} setting. On the other hand one can take expectation with respect to the potential and consider the averaged behaviour of $u$. This is the so-called {\em annealed} setting. Expectation with respect to $\xi$ will be denoted by $\left<\cdot\right>$, and the corresponding probability measure will be denoted by $\mathbf{P}$. Those realisations of $\xi$ that govern the quenched behaviour of $u$ differ heavily from those that govern the annealed behaviour, see \cite{GM90}. Therefore, it is interesting to understand the transition mechanism from quenched to annealed behaviour.  \\  
To this end we are interested in expressions such as $\frac{1}{|Q|}\sum_{x\in Q}u(t,x)$ where $Q$ is a large centred  box. If $Q$ has a fixed size then $\frac{1}{|Q|}\sum_{x\in Q}u(t,x)$ follows quenched behaviour as $t$ tends to infinity. This can be deduced from the Feynman-Kac representation of $u$ given by
 \[u(t,x)=\mathbb{E}_x\exp\Bigg\{\int\limits_0^t \xi\left(X_s\right)\,{\rm d}s\Bigg\}u_0\left(X_t\right),\qquad(t,x)\in [0,\infty)\times\mathbb{Z}^{d},\]
where $X$ is a simple, symmetric, continuous time random walk with generator $\kappa\Delta$ and $\P_x$ ($\mathbb{E}_x$) denotes the corresponding probability measure (expectation) if $X_0=x$ a.s. \\
On the other hand, if we fix $t$ and let the size of $Q$ tend to infinity then (due to the homogeneous initial condition) by Birkhoff's ergodic theorem $\frac{1}{|Q|}\sum_{x\in Q}u(t,x)$ displays annealed behaviour almost surely. Therefore, a natural question is what happens if the box $Q$ is time dependent. \\
More precisely, we want to find for all $\alpha\in(0,2)$ a large box $Q_{L_{\alpha}(t)}$, with $Q_{r(t)}=[-r(t),r(t)]^d\cap\Z^d$, for any $r(t)>0$, and numbers $A(t),B_{\alpha}(t)$ such that
\[\sum\limits_{x\in Q_{L_\alpha(t)}}\frac{u(t,x)-A(t)}{B_{\alpha}(t)}\stackrel{t\to\infty}{\Longrightarrow}\mathcal{F_\alpha},\]
with $\mathcal{F}_\alpha$ a suitable stable distribution.\\
In the case $\kappa=0$, i.e.\ if the solutions at different sites are independent, the problem has been addressed in \cite{BABM05} under the assumption that the logarithmic tail of the distribution is normalized regularly varying. A wider class of disributions was considered in \cite{B06}. In \cite{J10} a conceptual treatment of several classes of timedependent sums is offered, in particular explaining the universality of the limit laws in different cases. In \cite{BABM05} the authors also give sufficient and necessary conditions on the growth rate of $Q$ for a weak law of large numbers (WLLN) and for a central limit theorem (CLT) to hold. Corresponding results for a WLLN and a CLT for the PAM were derived in \cite{BAMR05} and in \cite{BAMR07}. They state that, under appropriate regularity assumptions, there exist $J(t)$ and $\gamma_1<\gamma_2$, all depending on the tails of $\xi$, such that:
\begin{enumerate}
 \item $\frac{1}{|Q_{\gamma J(t)}|}\sum\limits_{x\in Q_{\gamma J(t)}}u(t,x)\sim\left<u(t,0)\right>$, as $t\to\infty$ if $\gamma>\gamma_1$, in probability,\\
 $\frac{1}{|Q_{\gamma J(t)}|}\sum\limits_{x\in Q_{\gamma J(t)}}u(t,x)=o\left(\left<u(t,0)\right>\right)$, as $t\to\infty$ if $\gamma<\gamma_1$, in probability.
\item $\frac{1}{|Q_{\gamma J(t)}|}\sum\limits_{x\in Q_{\gamma J(t)}}\frac{u(t,x)-\left<u(t,0)\right>}{\sqrt{\left<u(t,0)^2\right>}}\Longrightarrow\mathcal{N}(0,1)$, as $t\to\infty$ if $\gamma>\gamma_2$,\\
$\frac{1}{|Q_{\gamma J(t)}|}\sum\limits_{x\in Q_{\gamma J(t)}}\frac{u(t,x)-\left<u(t,0)\right>}{\sqrt{\left<u(t,0)^2\right>}}=o(1)$, as $t\to\infty$ if $\gamma<\gamma_2$, in probability.
\end{enumerate}
\noindent
Here, $\mathcal{N}(0,1)$ denotes the law of the standard normal distribution with variance 1.\\
 However, $\alpha$-stable limits for the PAM have not been investigated so far. Furthermore, we give sufficient conditions on the growth rate of $Q$ for a strong law of large numbers to hold. So far this has been done neither for the PAM nor for the $\kappa=0$ case.\\
For a general overview of the parabolic Anderson model see for instance \cite{M94} and \cite{GK05}. A WLLN and a CLT for the PAM with time-dependent white noise potential using rather different techniques can be found in \cite{CM07}. Similar questions concerning a version of the random energy model were investigated in \cite{BKL02}.  

\subsection{Main results}
To state the main results we need to introduce some notation. Let
\[\varphi(h):=-\log\mathbf{P}(\xi(0)>h)\]
 and $h_t$ being a solution to
\[\sup\limits_{h\in(0,\infty)}\left(th-\varphi(h)\right)=th_t-\varphi\left(h_t\right).\]  
If $\varphi$ is ultimately convex then $h_t$ is unique for any large $t$.
Throughout this paper we will assume that 
$\xi(0)$ is unbounded from above and has finite exponential moments of all orders.
 Under these circumstances the left-continuous inverse of $\varphi$,
\[\psi(s):=\min\left\{r\colon \varphi(r)\geq s\right\},\qquad s>0, 
\]
is well defined. Furthermore, this implies that the cumulant generating function 
\[H(t):=\log\left<\exp\{t\xi(0)\}\right>,\qquad t\geq0,\]
 is well-defined and that $H(t)<\infty$ for all $t$ with $\lim_{t\to\infty}H(t)/t=\infty$. If $\varphi\in C^2$ is ultimately convex and satisfies some mild regularity assumptions then the Laplace method yields that $H(t)=th_t-\varphi(h_t)+o(t)$.
In the sequel we will frequently need the following regularity assumptions.\\
\noindent
\underline{Assumption F}:\\
There exists $\rho\in[0,\infty]$ such that for all $c\in(0,1)$, 
\[\lim_{t\to\infty}\left[\psi(ct)-\psi(t)\right]=\rho c\log c.\]
\noindent
\underline{Assumption H}:\\
There exists $\rho\in[0,\infty]$ such that for all $c\in(0,1)$,
\[\lim_{t\to\infty}\frac{1}{t}\left[H(ct)-cH(t)\right]=\rho c\log c.\]
In \cite[Theorems 1.2 and 2.2]{GM98} the authors prove that there exists $\chi=\chi(\rho)\in[0,2d\kappa]$ such that 
\begin{equation}\label{gm98}
\frac{\log u(t,0)}{t}=\xi^{\ssup 1}_{Q_{t}}-\chi+o(1),\qquad \text{a.s.},
\end{equation}
with $\xi^{\ssup 1}_{A}=\sup\{\xi(x)\colon x\in A\}$, if Assumption F is satisfied, and
\begin{equation}\label{GM98}
 \frac{\log\left<u(t,0)^p\right>}{t}=\frac{H(pt)}{t}-p\chi+o(1),\qquad p\in\N,
\end{equation}
if Assumption H is satisfied. Notice that Assumption F implies Assumption H. Furthermore, it turns out that $\chi=\chi(\rho)$ is strictly increasing in $\rho$ with $\chi(0)=0$ and $\chi(\infty)=2d\kappa$.  For details see \cite{GM98}.\\
Prominent examples satisfying Assumption F are the double exponential distribution, i.e. $\mathbf{P}(X>x)=\exp\{-\exp\{x/\rho\}\},\,x>0$, for $\rho\in(0,\infty)$ and the Weibull distribution, i.e. $\mathbf{P}(X>x)=\exp\{-x^\gamma\},\ x>0$ with $\gamma>1$ for $\rho=\infty$. \\
For $\alpha\in (0,2)$ let $\mathcal{F}_\alpha$ be the $\alpha$-stable distribution with characteristic function 
\[\phi_\alpha(u)=\begin{cases}
  \exp\left\{-\Gamma(1-\alpha)|u|^\alpha\exp\left\{\frac{-i\pi\alpha}{2}\text{sign} u\right\}\right\},&\alpha\neq1,\\
   \exp\left\{iu(1-\gamma)-\frac{\pi}{2}|u|\left(1+2\pi i\log |u|\text{ sign}u\cdot \right)\right\},\qquad &\alpha=1.            
              \end{cases}
\]
Moreover, let 
\[L_{\alpha}(t):=\exp\{\varphi\big(h_{\alpha t}\big)\} \text{   and   } B_{\alpha}(t):=\exp\{t\cdot\big(h_{\alpha t}-\chi+o(1)\big)\},\]
 where the error term of $B_{\alpha}(t)$ is chosen in suitable way. Then we find our main result: 

\begin{theorem}[Stable limit laws]\label{stablemain}
 Let $\varphi\in C^2$ be ultimately convex and Assumption F be satisfied. Then for $\alpha\in(0,2)$,
\[\sum\limits_{x\in Q_{L_{\alpha}(t)}}\frac{u(t,x)-A(t)}{B_{\alpha}(t)}\stackrel{t\to\infty}{\Longrightarrow}\mathcal{F_\alpha},\]
with
\[A(t)=\begin{cases}
        0, &\text{if } \alpha\in(0,1),\\
\left<u(t,0)\right>,  &\text{if } \alpha\in(1,2),\\
\left<u(t,0)\mathbbm{1}_{u(t,0)\leq B_{\alpha}(t)}\right>, \quad &\text{if } \alpha=1.     \end{cases}
\]
\end{theorem}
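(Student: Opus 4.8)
The plan is to reduce the spatial sum to a sum of asymptotically independent, heavy‑tailed contributions coming from the exceptionally high peaks of the potential, and then to invoke the classical theory of stable limit laws via convergence of an associated point process. The scaling $B_{\alpha}(t)$ and the box radius $L_{\alpha}(t)$ are calibrated precisely so that the block‑reduced exceedances of $u(t,x)/B_{\alpha}(t)$ have an intensity converging to the L\'evy measure $\alpha c\,y^{-\alpha-1}\,{\rm d}y$ of a spectrally positive $\alpha$‑stable law; the unspecified $o(1)$ in the exponent of $B_{\alpha}(t)$ is then fixed so as to normalise the constant $c$ to the value producing exactly $\mathcal{F}_\alpha$.

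First I would establish the one‑site tail asymptotics. Using the quenched logarithmic asymptotics \eqref{gm98}, $\log u(t,0)=t\big(\xi^{\ssup 1}_{Q_t}-\chi\big)+o(t)$, the event $\{u(t,0)>yB_{\alpha}(t)\}$ is, to leading exponential order, the event $\{\xi^{\ssup 1}_{Q_t}>h_{\alpha t}+\tfrac{\log y}{t}+o(1)\}$. Since $\xi^{\ssup 1}_{Q_t}$ is the maximum of $\asymp|Q_t|$ i.i.d.\ values, $\mathbf P(\xi^{\ssup 1}_{Q_t}>h)\approx|Q_t|\,e^{-\varphi(h)}$ for large $h$. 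The decisive computation is the Taylor expansion
\[
\varphi\Big(h_{\alpha t}+\tfrac{\log y}{t}\Big)=\varphi(h_{\alpha t})+\varphi'(h_{\alpha t})\,\tfrac{\log y}{t}+o(1)=\varphi(h_{\alpha t})+\alpha\log y+o(1),
\]
where I use the defining relation $\varphi'(h_{\alpha t})=\alpha t$ for the maximiser $h_{\alpha t}$. This produces a regularly varying tail of index $\alpha$, placing the summands in the domain of attraction of $\mathcal{F}_\alpha$. The $C^2$‑ and ultimate‑convexity hypotheses on $\varphi$, together with Assumption F, are exactly what is needed to make the error terms uniform and to pin down the tail constant.

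Next I would handle the spatial dependence. Because $u(t,x)$ is driven by the maximum of $\xi$ over the $t$‑box $x+Q_t$, a single high peak influences $u(t,\cdot)$ on an entire $t$‑neighbourhood, so the summands are strongly correlated on scale $t$; the site‑indexed family clumps and must be reorganised by peaks. I would partition the enlarged box into blocks of side length much larger than $t$, show that the sum is dominated by blocks carrying one exceptional peak, and prove that distinct such blocks contribute asymptotically independently, being measurable with respect to disjoint portions of the i.i.d.\ field up to negligible boundary corrections. The finite exponential moments of $\xi(0)$, via a truncation estimate grounded in the annealed moments \eqref{GM98}, then show that the aggregate of all sub‑threshold sites, after centring and division by $B_{\alpha}(t)$, is negligible in probability. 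Combined with Step~1, the block‑reduced point process converges to a Poisson process on $(0,\infty)$ with intensity $\alpha c\,y^{-\alpha-1}\,{\rm d}y$ (Kallenberg's criterion: convergence of intensities plus asymptotic independence). Applying the summation functional and separating the three centring regimes — no centring for $\alpha\in(0,1)$, the full annealed mean $\left<u(t,0)\right>$ for $\alpha\in(1,2)$, and the truncated mean $\left<u(t,0)\mathbbm 1_{u(t,0)\le B_{\alpha}(t)}\right>$ at $\alpha=1$ — yields convergence to $\mathcal{F}_\alpha$ with the stated $A(t)$.

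The main obstacle is the decoupling step for $\kappa>0$: one must pass from the genuine solution $u(t,x)$ to an independent‑peak surrogate with a quantitatively controlled profile, sharper than the leading exponential rate furnished by \eqref{gm98}, while simultaneously bounding the overlaps of the $t$‑neighbourhoods and the combined fluctuations of the many correlated sub‑threshold terms. In the $\kappa=0$ case the sum is literally one of i.i.d.\ variables $e^{t\xi(x)}$ and independence is immediate; the entire difficulty here lies in showing that the diffusion only reshuffles mass within $t$‑neighbourhoods and hence contributes merely the subexponential correction absorbed into $B_{\alpha}(t)$, without altering the $\alpha$‑stable character of the limit.
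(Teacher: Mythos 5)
Your plan follows the same broad strategy as the paper — tile $Q_{L_\alpha(t)}$ into blocks, reduce to (asymptotically) independent heavy-tailed block contributions whose index-$\alpha$ tails come from the Taylor expansion $\varphi\big(h_{\alpha t}+\tfrac{\log y}{t}\big)=\varphi(h_{\alpha t})+\alpha\log y+o(1)$ with $\varphi'(h_{\alpha t})=\alpha t$, then invoke classical stable-limit machinery (you via point processes and Kallenberg, the paper via Petrov's Condition P for i.i.d.\ triangular arrays). But there is a genuine gap at the foundation, namely your Step 1. The tail asymptotics of $u(t,0)$ cannot be extracted from \eqref{gm98}: that result determines $\log u(t,0)$ only up to a \emph{random} error $o(t)$, equivalently it locates the threshold event for $\xi^{\ssup 1}_{Q_t}$ only up to a random shift $o(1)$. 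Since the derivative at the relevant height is $\varphi'(h_{\alpha t})=\alpha t$, such a shift produces an uncontrolled multiplicative error $\mathrm{e}^{\pm o(t)}$ in $\mathbf{P}\big(u(t,0)>yB_\alpha(t)\big)$. A stable limit theorem needs this tail pinned down to a factor $1+o(1)$ against the number $|Q_{L_\alpha(t)}|/|Q_{l(t)}|=\mathrm{e}^{\Theta(\varphi(h_{\alpha t}))}$ of summands, so ``to leading exponential order'' is not a usable precision; and because the error is a functional of $\xi$, varying from site to site, it cannot be removed by adjusting the single deterministic $o(1)$ in the exponent of $B_\alpha(t)$, as you propose. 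You yourself flag exactly this as ``the main obstacle'' in your closing paragraph, but flagging it is not overcoming it: as written, the Poisson-intensity computation, and hence the limit law, is unsupported.

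This is precisely the point the paper's construction is designed to get around, and it never uses single-site tails. It tiles with \emph{Dirichlet} blocks of side $l(t)=\max\{t^2\log^2t,H(4t)\}$, so the block sums are exactly i.i.d.; the spectral representation \eqref{spectral} collapses each block sum to $\mathrm{e}^{t\mu_t^{\ssup i}}$ with $\mu_t=\lambda_1+o(1)$, and the finer eigenvalue expansion $\lambda_1=\xi^{\ssup 1}_{Q_{l(t)}}-\chi+o(1)$ from \cite{GM98} (Lemma~\ref{trick1}) links $\mu_t$ to the potential maximum. Decisively, the normalization is then defined through the exact tail function $\widetilde{\varphi}_t(x)=-\log\mathbf{P}(\mu_t>x)$ of the block variable itself, $B_\alpha(t)=\mathrm{e}^{t\widetilde h_{\alpha t}}$, so that all that must be proved is local regular variation of $\widetilde{\varphi}_t$ at its own quantile (Lemma~\ref{alpha}) together with truncated-moment bounds (Proposition~\ref{momente}); the otherwise uncontrollable errors are pushed into the definition of the free $o(1)$ in $B_\alpha(t)$ instead of corrupting the tail constant. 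Note also that your blocks of side ``much larger than $t$'' are too small for the decoupling step: the boundary estimate (Lemma~\ref{uniform}) needs a buffer the walk cannot cross by time $t$ at any probability scale comparable to the Feynman--Kac weight $\mathrm{e}^{t\xi^{\ssup 1}_{Q_{l(t)}}}$, which forces buffer width $\gg t$ (in the paper, $l(t)/t\geq t\log^2t$), not merely block side $\gg t$. If you repair your proposal by defining $B_\alpha$ through block quantiles and decoupling via Dirichlet boundary conditions, you have essentially reproduced the paper's proof with Kallenberg's theorem in place of \cite[Chapter IV]{P75}.
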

\noindent
Furthermore, we find: 

\begin{theorem}[Strong law of large numbers]\label{slln}
 Let Assumption H be satisfied, and $r(t)$ be so large that $\lim_{t\to\infty}\frac 1t\big(\log|Q_{r(t)}|-H(2t)+2H(t)\big)>0$ then for every sequence $(t_n)_{n\in\N}$ satisfying $\sum_{t_n}\exp\{-t_n\}<\infty$,
\[\frac{1}{|Q_{r(t_n)}|}\sum_{x\in Q_{r(t_n)}}\left(\frac{u(t_n,x)}{\left<u(t_n,0)\right>}-1\right)\stackrel{t_n\to\infty}{\longrightarrow} 0 \quad\text{ a.s.}\]
\end{theorem}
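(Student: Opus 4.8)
The plan is to establish the almost-sure convergence through a first Borel--Cantelli argument, so that it suffices to produce a summable bound on the deviation probabilities. Write
\[
S_n:=\frac{1}{|Q_{r(t_n)}|}\sum_{x\in Q_{r(t_n)}}\Big(\frac{u(t_n,x)}{\left<u(t_n,0)\right>}-1\Big).
\]
Since $\xi$ is i.i.d., the field $u(t,\cdot)$ is spatially stationary, so $\left<u(t,x)\right>=\left<u(t,0)\right>$ for every $x$ and hence $\left<S_n\right>=0$. This makes the variance the natural object, and I would apply Chebyshev's inequality, $\mathbf{P}(|S_n|>\varepsilon)\le\varepsilon^{-2}\left<S_n^2\right>$, reducing the whole problem to a sufficiently good upper bound on $\left<S_n^2\right>$. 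Note that $\sum_n e^{-t_n}<\infty$ forces $t_n\to\infty$, which will let me absorb all $o(t_n)$ corrections later.

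Next I would expand
\[
\left<S_n^2\right>=\frac{1}{|Q_{r(t_n)}|^2\left<u(t_n,0)\right>^2}\sum_{x,y\in Q_{r(t_n)}}\mathrm{Cov}\big(u(t_n,x),u(t_n,y)\big),
\]
and split the double sum into the diagonal $x=y$ and the off-diagonal part. The diagonal contributes $|Q_{r(t_n)}|\,\mathrm{Var}(u(t_n,0))\le|Q_{r(t_n)}|\left<u(t_n,0)^2\right>$, and by the annealed moment asymptotics \eqref{GM98} one has $\left<u(t,0)^2\right>=\exp\{H(2t)-2\chi t+o(t)\}$ and $\left<u(t,0)\right>^2=\exp\{2H(t)-2\chi t+o(t)\}$. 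Hence the diagonal part of $\left<S_n^2\right>$ is of order $|Q_{r(t_n)}|^{-1}\exp\{H(2t_n)-2H(t_n)+o(t_n)\}$, and the hypothesis that $\tfrac1t\big(\log|Q_{r(t)}|-H(2t)+2H(t)\big)$ converges to some $a>0$ bounds this by $\exp\{-at_n+o(t_n)\}$, i.e.\ it decays exponentially in $t_n$.

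The genuinely hard part will be the off-diagonal terms: I must show that the positive correlations between $u(t,x)$ and $u(t,y)$ for $x\ne y$ do not spoil this decay. Here I would pass to the Feynman--Kac representation and write, for two independent walks $X,Y$ started at $x,y$ with time-$t$ local times $\ell^X_t,\ell^Y_t$,
\[
\left<u(t,x)u(t,y)\right>=\E_x\E_y\exp\Big\{\sum_{z}H\big(\ell^X_t(z)+\ell^Y_t(z)\big)\Big\},
\]
\[
\left<u(t,x)\right>\left<u(t,y)\right>=\E_x\E_y\exp\Big\{\sum_{z}\big(H(\ell^X_t(z))+H(\ell^Y_t(z))\big)\Big\}.
\]
Because $H$ is convex with $H(0)=0$, it is superadditive, so the covariance is nonnegative and is nonzero only when the two paths meet; as walks run up to time $t$ travel at most a distance of order $t$, this confines the relevant $y$ to a window around $x$ whose volume grows only polynomially, hence sub-exponentially, in $t$. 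Summing over $y$ therefore costs only a sub-exponential factor, so the off-diagonal contribution to $\left<S_n^2\right>$ is of the same exponential order $\exp\{-at_n+o(t_n)\}$ as the diagonal one. Combining the two gives $\mathbf{P}(|S_n|>\varepsilon)\le\varepsilon^{-2}\exp\{-at_n+o(t_n)\}$; since $t_n\to\infty$ this decays exponentially, and invoking the summability hypothesis $\sum_n\exp\{-t_n\}<\infty$ yields $\sum_n\mathbf{P}(|S_n|>\varepsilon)<\infty$, so Borel--Cantelli delivers $S_n\to0$ almost surely. The crux, and the step I expect to absorb most of the work, is the uniform control of the path-overlap expansion above, which amounts to quantifying the spatial decay of correlations in the PAM.
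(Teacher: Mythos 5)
Your skeleton---Chebyshev plus Borel--Cantelli, splitting the variance into a diagonal and an off-diagonal part, and using the annealed asymptotics \eqref{GM98} to bound the diagonal part by $\exp\{-\log|Q_{r(t_n)}|+H(2t_n)-2H(t_n)+o(t_n)\}$---is exactly the paper's route, and that part is sound. The genuine gap is in the step you yourself call the crux: the off-diagonal covariance control. Your argument rests on the claim that the covariance vanishes unless the two walks meet, together with the assertion that walks run up to time $t$ ``travel at most a distance of order $t$'', so that only a window of polynomially many sites around $x$ contributes. For a continuous-time walk the latter is false as a deterministic statement, and its probabilistic version is quantitatively far too weak here, because on the rare event that a path travels a distance $R$ the Feynman--Kac integrand is not of order one: its annealed size is governed by $\left<u(t,0)^4\right>^{1/2}=\exp\{H(4t)/2+O(t)\}$. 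Concretely, the natural Cauchy--Schwarz/H\"older bound for the contribution of paths exiting a box of radius $R$ is
\[
\left<u(t,x)u(t,y)\mathbbm{1}_{\mathrm{exit}}\right>\le\sqrt{\left<u(t,0)^4\right>}\,\sqrt{2\,\P_x\big(\tau_{(x+Q_R)^c}<t\big)}\le\exp\Big\{\tfrac{1}{2}H(4t)-\tfrac{1}{2}R\log\Big(\tfrac{R}{d\kappa t}\Big)+O(t)\Big\},
\]
and this must be $o\big(\left<u(t,0)\right>^2\big)=o\big(\exp\{2H(t)-2\chi t+o(t)\}\big)$. Since $H(0)=0$ and $H$ is convex, $H(4t)\ge 4H(t)$, so the deficit $\tfrac{1}{2}H(4t)-2H(t)$ is nonnegative and in general of order $H(4t)$, which grows super-linearly in $t$ (recall $H(t)/t\to\infty$). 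A window of radius $R=O(t)$ only produces $R\log(R/(d\kappa t))=O(t)$ in the exponent and therefore cannot beat this deficit: with your cutoff the error term diverges rather than vanishes.

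This is precisely why the paper's decorrelation length is the potential-dependent scale $l(t)=\max\{t^2\log^2 t,\,H(4t)\}$, not a polynomial in $t$: then $R\log(R/(d\kappa t))\ge H(4t)\log\big(t\log^2 t/(d\kappa)\big)\gg H(4t)$, so the exit term does vanish (Lemma~\ref{h4t}, using the estimates of \cite{GM90}). The paper also replaces the ``paths must meet'' heuristic by an exact independence statement: it compares $u(t,x)$ with the Dirichlet solution $u^{\ssup 1}(t,x)$ killed outside $x+Q_{l(t)}$; for $|x-y|>2l(t)$ these localized solutions are genuinely independent because they depend on disjoint portions of $\xi$, and the entire covariance is pushed into the exit term bounded above. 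To repair your proof you would have to enlarge your window from radius $O(t)$ to radius at least $l(t)$, and then note (as the paper does implicitly) that $\log|Q_{l(t)}|=o(t)$ under the standing regularity, so that the larger near-diagonal count is still absorbed into the $o(t_n)$ of the exponential bound. As written, your off-diagonal estimate fails.
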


Notice that the necessary growth rate of $Q$ for a WLLN to hold is the same as in Theorem~\ref{stablemain} for $\alpha=1$ and that the necessary growth rate of $Q$ for a CLT to hold corresponds to $\alpha=2$, see \cite{BAMR07}. The growth rate in Theorem~\ref{slln} is of the same order as in the CLT case. Notice, that Theorem \ref{stablemain} is closer to the i.i.d. case than to the case of a random walk among random obstacles as considered in \cite[Theorem 3]{BAMR05} where the limiting distributions are not stable laws, but infinite divisible distributions with Levy spectral functions that are not continuous. It seems as if the discrete character of the random walk is more decisive for that model than for ours which can be reduced to the i.i.d. case by virtue of an appropriate coarse-graining.\\
To get a feeling for the numbers involved we give them for the two examples mentioned above in the table below.\\
\newline
\noindent
\begin{tabular}{|l|l|l|l|}
 \hline
{\bf Distribution}&{\boldmath $ \varphi(x)$}&{\boldmath $ \log L_{\alpha}(t)$}&{\boldmath $\log B_{\alpha}(t)$} \\
\hline
Weibull& $x^\gamma,\,\gamma>1$  &$\left(\frac{\alpha t}{\gamma}\right)^{\gamma/(\gamma-1)}$& $t\left(\frac{\alpha t}{\gamma}\right)^{1/(\gamma-1)}-2d\kappa \alpha t+o(t)$\\
\hline
Double-exponential&$\exp\{x/\rho\}$& $\rho \alpha t$&$t\rho\log\rho \alpha t-\chi(\rho)\alpha t +o(t)$ \\
\hline
\end{tabular}\\
\newline
\noindent
Notice that in the Weibull case we have
 \[\log B_{\alpha}(t)=\frac{1}{\alpha}\left(\log\left<u(t,0)^\alpha\right>+\log|Q_{L_{\alpha}(t)}|\right)+o(t),\]
 see \cite{GS11}. Because of \eqref{GM98} and our considerations in Section~\ref{Stable limits} this relationship seems to be true in the double exponential case, as well.

\section{Stable limit laws}\label{Stable limits}

\noindent Let us explain our strategy of the proof of Theorem~\ref{stablemain}. We decompose the large box $Q_{L_{\alpha}(t)}$ into boxes $Q_{l(t)}^{\ssup i},\, i= 1,\cdots,\lfloor|Q_{L_{\alpha}(t)}|/|Q_{l(t)}|\rfloor$ of much smaller size.  
In each subbox we approximate $u$ by $u^{\ssup i}$, the solution with Dirichlet boundary conditions in $Q_{l(t)}^{\ssup i}$. 
In this way, we reduce the problem to the case of i.i.d.\ random variables. A spectral representation shows that $\sum_{x\in Q_{l(t)}^{\ssup i}}u^{\ssup i}(t,x)$ can be approximated by ${\rm e}^{t\lambda^{\ssup i}_1}$, where $\lambda^{\ssup i}_1$ is the principal Dirichlet eigenvalue of  $\Delta+\xi$ in  $Q_{l(t)}^{\ssup i}$. Then a classical result on stable limits for sums of $t$-dependent i.i.d.\ random variables yields the result.

Note that we cannot apply the results of \cite{BABM05} since they require the function $\varphi$ to be normalized regularly varying, which is for instance not true in the important case of double-exponential tails. An alternative approach could be to adopt the techniques from \cite{B06}.

Let us turn to the details. We first work on the $u^{\ssup i}$ and show in the end how to approximate $u$ by $u^{\ssup i}$. We assume that $Q_{l(t)}^{\ssup i}$ are translated copies of $Q_{l(t)}$. We consider the solution $u^{\ssup i}$ to the PAM in $Q_{l(t)}^{\ssup i}$ with Dirichlet boundary conditions, i.e.\ $\xi(x)=-\infty$ for all $x\notin Q_{l(t)}^{\ssup i}$, where $l(t)=\max\{t^2\log^2 t, H(4t)\}$. The corresponding Laplacian will be denoted $\Delta_{Q_{l(t)}^{\ssup i}}^0$.
The Feynman-Kac representation of $u^{\ssup i}$ reads
\[u^{\ssup i}(t,x)=\E_x\exp\Bigg\{\int\limits_0^\infty \xi(X_s)\,\text{d}s\Bigg\}\mathbbm{1}_{\tau_{(Q_{l(t)}^{\ssup i})^c}>t},\qquad(t,x)\in[0,\infty)\times Q_{l(t)}^{\ssup i}.\]
By $\tau_U:=\text{inf}\left\{t>0\colon X_t\in U\right\}$ we denote the first hitting time of a set $U$ by a random walk $X$. Let $\lambda_1^{\ssup i},\cdots,\lambda_{|Q_{l(t)}|}^{\ssup i}$ be the order statistics of the eigenvalues of the Anderson Hamiltonian $\Delta_{Q_{l(t)}^{\ssup i}}^0+\xi$ and $e_1^{\ssup i},\cdots,e_{|Q_{l(t)}|}^{\ssup i}$ be the corresponding orthonormal basis. Then we have the following spectral representation
\begin{equation}\label{spectral}
 \sum\limits_{x\in Q_{l(t)}^{\ssup i}}u^{\ssup i}(t,x)=\sum\limits_{x,y\in Q_{l(t)}^{\ssup i}}\sum\limits_{k=1}^{| Q_{l(t)}|} {\rm e}^{\lambda_k^{\ssup i} t}e_k^{\ssup i}(x)e_k^{\ssup i}(y),\qquad t\in[0,\infty).
\end{equation}
For simplicity we have suppressed the time dependence of the eigenvalues and eigenvectors that arises because the boxes are time dependent.
From Parseval's inequality, the fact that $l(t)$ is of subexponential order and the proof of Theorem 2.2 in \cite{GM98} it follows that there exists $\widetilde{\varepsilon}^{\ssup i}(t)=\widetilde{\varepsilon}^{\ssup i}(\xi,t)=o(1)$  such that
\[\sum\limits_{x\in Q_{l(t)}^{\ssup i}}u^{\ssup i}(t,x)={\rm e}^{t\mu_t^{\ssup i}},\qquad\mbox{where}\qquad \mu_t^{\ssup i}=\mu_t^{\ssup i}(\xi)=\lambda_1^{\ssup i}+\widetilde{\varepsilon}^{\ssup i}(t).\]
Sometimes we will write $\mu_t$ instead of $\mu_t^1$, $\lambda_1$ for $\lambda_1^{\ssup i}$ and $\widetilde \varepsilon(t)$ for $\widetilde \varepsilon^{\ssup i}(t)$.

\begin{remark}
The above already implies that for $\log r(t)=o\big(H(t)\big)$ the quenched setting is prominent in the following sense,
\[\lim\limits_{t\to\infty}\frac{\log u(t,0)}{\log\sum\limits_{x\in Q_{r(t)}}u(t,x)}=1,\qquad \text{a.s.}
\]
\end{remark} 
\noindent
In the next lemma we show how the distributions of $\mu_t$ and $\xi(0)$ are linked.

\begin{lemma}\label{trick1}
Let Assumption F be satisfied. Then for all functions $h$ with $\lim\limits_{t\to\infty}|Q_{l(t)}|\mathbf{P}\big(\xi(0)>h(t)\big)=0$ there exists $\varepsilon(t)=\varepsilon(\xi,t)=o(1)$ such that,
\[ \mathbf{P}\Big(\mu_{t}> h(t)\Big)\sim|Q_{l(t)}|\mathbf{P}\big(\xi(0)>h(t)+\chi-\varepsilon(t)\big),\qquad t\to\infty. \]
\end{lemma}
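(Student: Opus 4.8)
The plan is to reduce the assertion to two independent ingredients: a \emph{localisation} estimate identifying $\mu_t$ with the recentred maximum $\xi^{\ssup{1}}_{Q_{l(t)}}-\chi$ of the potential, and the classical extreme-value asymptotics for the maximum of $|Q_{l(t)}|$ i.i.d.\ copies of $\xi(0)$. The shift $\chi$ will come from the first ingredient, the combinatorial factor $|Q_{l(t)}|$ from the second, and the residual error $\varepsilon(t)$ from reconciling the two.

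First I would record the eigenvalue localisation. The Dirichlet Laplacian being non-positive, the Rayleigh quotient gives the cheap a priori bound $\lambda_1\le\xi^{\ssup{1}}_{Q_{l(t)}}$, while the matching two-sided estimate $\lambda_1=\xi^{\ssup{1}}_{Q_{l(t)}}-\chi+o(1)$ is precisely the eigenvalue asymptotics underlying \eqref{gm98}, valid under Assumption~F by the proof of Theorem~2.2 in \cite{GM98}; this is also where Assumption~F (rather than merely~H) is needed, since it produces the constant $\chi=\chi(\rho)$. Together with $\mu_t=\lambda_1+\widetilde\varepsilon(t)$, $\widetilde\varepsilon(t)=o(1)$, this yields a deterministic $\eta(t)=o(1)$ and an exceptional event $B(t)$ such that, on its complement,
\[\big\{\xi^{\ssup{1}}_{Q_{l(t)}}>h(t)+\chi+\eta(t)\big\}\subseteq\big\{\mu_t>h(t)\big\}\subseteq\big\{\xi^{\ssup{1}}_{Q_{l(t)}}>h(t)+\chi-\eta(t)\big\}.\]

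Next I would treat the extreme-value term. By independence $\mathbf{P}\big(\xi^{\ssup{1}}_{Q_{l(t)}}>a\big)=1-\big(1-\mathbf{P}(\xi(0)>a)\big)^{|Q_{l(t)}|}$, and the hypothesis $|Q_{l(t)}|\mathbf{P}(\xi(0)>h(t))\to0$ forces this product to vanish for $a$ in the range $h(t)+\chi\pm\eta(t)$, so a first-order expansion gives $\mathbf{P}\big(\xi^{\ssup{1}}_{Q_{l(t)}}>a\big)=|Q_{l(t)}|\mathbf{P}(\xi(0)>a)(1+o(1))$ there. Taking probabilities in the sandwich and inserting this expansion bounds $\mathbf{P}(\mu_t>h(t))$ above and below by $|Q_{l(t)}|\mathbf{P}(\xi(0)>h(t)+\chi\mp\eta(t))(1+o(1))$ up to the error $\mathbf{P}(B(t))$. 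Provided the latter is negligible, the monotonicity and continuity of the tail let me define $\varepsilon(t)$ by $|Q_{l(t)}|\mathbf{P}(\xi(0)>h(t)+\chi-\varepsilon(t))=\mathbf{P}(\mu_t>h(t))$ and read off $\varepsilon(t)\in[-\eta(t),\eta(t)]+o(1)=o(1)$, which is the claim. It is essential to keep $\varepsilon(t)$ inside the probability and to invert in this way rather than to compare the two outer thresholds directly: already for double-exponential tails the ratio $\mathbf{P}(\xi(0)>a+\eta)/\mathbf{P}(\xi(0)>a)$ does not tend to $1$ as $a\to\infty$, so the upper and lower bounds are \emph{not} asymptotically equivalent and an $o(1)$ shift of the threshold cannot be absorbed into a $(1+o(1))$ prefactor.

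The main obstacle is the control of $\mathbf{P}(B(t))$. Since the target $|Q_{l(t)}|\mathbf{P}(\xi(0)>h(t)+\chi)$ tends to zero, it is not enough that localisation holds with probability tending to one; I need $\mathbf{P}(B(t))$ to be \emph{negligible compared with} $|Q_{l(t)}|\mathbf{P}(\xi(0)>h(t)+\chi\pm\eta(t))$, and in the double-exponential regime the latter can decay faster than the downward $\eta$-shift enlarges it, so that the lower inclusion is the delicate one. I would obtain the required bound by extracting quantitative (exponentially small) failure probabilities from the estimates in \cite{GM98}, and by using the subexponential choice $l(t)=\max\{t^2\log^2 t,H(4t)\}$ to keep $|Q_{l(t)}|$ polynomially small against them; Assumption~F enters once more through the regular spacing of the quantiles $\psi$, which is what turns the two-sided $\eta(t)$-shift into the single admissible error $\varepsilon(t)$.
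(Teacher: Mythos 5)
Your route is the same as the paper's: eigenvalue localisation $\lambda_1=\xi^{\ssup 1}_{Q_{l(t)}}-\chi+o(1)$ taken from the proof in \cite{GM98}, followed by the elementary i.i.d.\ computation $1-\big(1-\mathbf{P}(\xi(0)>a)\big)^{|Q_{l(t)}|}\sim|Q_{l(t)}|\mathbf{P}(\xi(0)>a)$ in the regime where this product tends to zero; and your side remark --- that for double-exponential tails an $o(1)$ shift of the threshold cannot be absorbed into a $(1+o(1))$ prefactor, so $\varepsilon(t)$ must stay inside the tail probability --- is exactly the reason the lemma is formulated this way. The structural difference is how the random error is treated. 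The paper sets $\varepsilon(t):=\widetilde\varepsilon(t)+\bar\varepsilon(t)$ equal to the exact, field-dependent discrepancy, so that $\{\mu_t>h(t)\}=\{\xi^{\ssup 1}_{Q_{l(t)}}>h(t)+\chi-\varepsilon(t)\}$ is an identity of events and no exceptional event ever appears; the price is that the binomial formula is then applied to a field-dependent threshold, which is not strictly licensed. You instead sandwich between deterministic thresholds $h(t)+\chi\pm\eta(t)$ at the price of an exceptional event $B(t)$, and recover a deterministic $\varepsilon(t)$ by inverting the tail; this is structurally more honest, but it moves the entire difficulty into bounding $\mathbf{P}(B(t))$.

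That bound is where your argument has a genuine gap. You need $\mathbf{P}(B(t))$ to be negligible relative to $|Q_{l(t)}|\mathbf{P}\big(\xi(0)>h(t)+\chi+\eta(t)\big)$, and your proposed mechanism --- ``exponentially small'' failure probabilities with $|Q_{l(t)}|$ ``polynomially small against them'' --- cannot deliver this. The function $h$ is arbitrary subject only to $|Q_{l(t)}|\mathbf{P}(\xi(0)>h(t))\to0$, so the comparison quantity can be made smaller than any prescribed sequence; and already in the intended application ($h(t)\approx\log B_\alpha(t)/t$, as in Lemma~\ref{alpha}) it is of order $\exp\{-d\varphi(h_{\alpha t})\}$, which under Weibull tails decays like $\exp\{-c\,t^{\gamma/(\gamma-1)}\}$, i.e.\ faster than exponentially. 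Hence no bound on $\mathbf{P}(B(t))$ that is uniform in $h$ can be ``relatively negligible''; the failure event has to be intersected with the exceedance event itself, say by locating the maximiser and using independence of $\xi$ at that site from the neighbouring field, which produces bounds of the form $|Q_{l(t)}|\mathbf{P}\big(\xi(0)>h(t)+\chi-2\eta(t)\big)\delta(t)$ with $\delta(t)\to0$ --- and one must then still argue that $\delta(t)$ beats the ratio $\mathbf{P}(\xi(0)>h+\chi-2\eta)/\mathbf{P}(\xi(0)>h+\chi+\eta)$, which for double-exponential tails is itself doubly exponentially large in $h$; this comparison is precisely the variational analysis behind $\chi(\rho)$ in \cite{GM98} and is not a formality. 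In fairness, the paper's own proof skates over the identical point by treating $\varepsilon(t)$ as if it were deterministic; but since your write-up explicitly isolates this as the crux, it cannot be left as an assertion.
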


\begin{proof}
In \cite[Proof of Theorem 2.16]{GM98} the authors show that the first eigenvalue of $\Delta_{Q_{l(t)}}^0+\xi$ satisfies
\[\lambda_1=\xi^{\ssup 1}_{Q_{l(t)}}-\chi+\bar{\varepsilon}(t),\]
 with $\bar{\varepsilon}(t)=\bar{\varepsilon}(\xi,t)=o(1)$. Let \[\varepsilon(t):=\widetilde{\varepsilon}(t)+\bar{\varepsilon}(t).\]
 Then
\begin{align*}
 \mathbf{P}\Big(\mu_{t}>h(t)\Big)&=\mathbf{P}\Big(\xi^{\ssup 1}_{Q_{l(t)}}>h(t)+\chi-\varepsilon(t)\Big)\\
&=1-\bigg(1-\mathbf{P}\Big(\xi(0)>h(t)+\chi-\varepsilon(t)\Big)\bigg)^{|Q_{l(t)}|}\\
&\sim 1-\exp\Big\{-|Q_{l(t)}|\mathbf{P}\Big(\xi(0)>h(t)+\chi-\varepsilon(t)\Big)\Big\}\\
&\sim |Q_{l(t)}|\mathbf{P}\Big(\xi(0)>h(t)+\chi-\varepsilon(t)\Big),\qquad t\to\infty.
\end{align*}
 In the third line we use L'Hopital's rule.
\end{proof}
\noindent
Let 
\[\widetilde{\varphi}_t(x)=-\log\mathbf{P}\left(\mu_t>x\right),\]
and $\widetilde{h}_t=h_t+\chi+o(1)$, where the error term is chosen in a suitable way. In particular it is chosen such that if $\widetilde{\varphi}_t$ is ultimately convex then $\widetilde{h}_t$ is the unique solution to
\[\sup\limits_{h\in(0,\infty)}\left(th-\widetilde{\varphi}_t(h)\right)=t\widetilde{h}_t-\widetilde{\varphi}_t\left(\widetilde{h}_t\right).\]
Then, an application of the Laplace method yields
\[\left<u(t,0)\right>\sim\left<{u}^{\ssup i}(t,0)\right>\sim t\int\limits_{0}^{\infty}\exp\{th-\widetilde{\varphi}(h)\}\,\mathrm{d}h=\exp\{\big[t\widetilde{h}_t-\widetilde{\varphi}\big(\widetilde{h}_t\big)\big]\big(1+o(1)\big)\}.\]
The first asymptotics follow from \cite[Proposition 7]{GS11}.
Hence, we obtain together with Lemma~\ref{trick1} that 
\[
\log B_{\alpha}(t)=t\widetilde{h}_{\alpha t}=\frac{1}{\alpha}\left(\log\left<u(t,0)^\alpha\right>+\log|Q_{L_{\alpha}(t)}|\right)\left(1+o(1)\right).\]
\noindent
To prove convergence of $\sum_{i\colon Q_{l(t)}^{\ssup i}\subset Q_{L_{\alpha}(t)}}\big(\mathrm{e}^{t\mu_t^{\ssup i}}-\widetilde A(t)\big)/B_{\alpha}(t)$, as $t\to\infty$, to an infinitely divisible distribution with characteristic function equal to
\begin{equation}\label{stablechar}
 \phi(u)=\exp\bigg\{iau-\frac{\sigma^2u^2}{2}+\int\limits_{|x|>0}\bigg(\mathrm{e}^{iux}-1-\frac{iux}{1+x^2}\bigg)\,\mathrm{d}\widetilde{L}(x)     \bigg\},
\end{equation}
 we have to verify the following condition (see \cite[Chapter IV]{P75}).\\
\underline{Condition P}:
\begin{enumerate}
 \item  Condition of infinite smallness:
\[\lim\limits_{t\to\infty}\max\limits_{i\colon Q_{l(t)}^{\ssup i}\subset Q_{L_{\alpha}(t)}}\mathbf{P}\bigg(\frac{\mathrm{e}^{t\mu_t^{\ssup i}}}{B_{\alpha}(t)}\geq\varepsilon\bigg)=0,\qquad \varepsilon>0.\]     \item In all points $x$ of continuity, the function $\widetilde{L}$ satisfies:
\[\widetilde{L}(x)=-\lim_{t\to\infty}\frac{|Q_{L_{\alpha}(t)}|}{|Q_{l(t)}|}\mathbf{P}\bigg(\frac{\mathrm{e}^{t\mu_t}}{B_{\alpha}(t)}>x\bigg).\]
 \item  The constant $\sigma^2$ satisfies:
\begin{align*}
 \sigma^2=&\lim_{\tau\to 0}\limsup_{t\to\infty}\frac{|Q_{L_{\alpha}(t)}|}{|Q_{l(t)}|}\text{Var}\bigg(\frac{\mathrm{e}^{t\mu_t}}{B_{\alpha}(t)}\mathbbm{1}_{\frac{\mathrm{e}^{t\mu_t}}{B_{\alpha}(t)}\leq\tau}\bigg)\\
=&\lim_{\tau\to 0}\liminf_{t\to\infty}\frac{|Q_{L_{\alpha}(t)}|}{|Q_{l(t)}|}\text{Var}\bigg(\frac{\mathrm{e}^{t\mu_t}}{B_{\alpha}(t)}\mathbbm{1}_{\frac{\mathrm{e}^{t\mu_t}}{B_{\alpha}(t)}\leq\tau}\bigg).
\end{align*}
\item For every $\tau>0$ the constant $a$ satisfies:
\[\lim_{t\to\infty}\bigg\{\frac{|Q_{L_{\alpha}(t)}|}{|Q_{l(t)}|}\left<\frac{\mathrm{e}^{t\mu_t}}{B_{\alpha}(t)}\mathbbm{1}_{\frac{\mathrm{e}^{t\mu_t}}{B_{\alpha}(t)}\leq\tau}\right>-\frac{\widetilde{A}(t)}{B_{\alpha}(t)}\bigg\}=a+\int\limits_0^\tau \frac{x^3}{1+x^2}\,{\rm d}L(x) -\int\limits_\tau^\infty \frac{x}{1+x^2}\,{\rm d}L(x).\]
\end{enumerate}

\noindent
Items i) and ii) will follow from the next lemma, and iii) and iv) from the next proposition.

\begin{lemma}\label{alpha}
Let Assumption F be satisfied and $\varphi\in C^2$ be ultimately convex. Then
 \[\lim_{t\to\infty}\frac{|Q_{L_{\alpha}(t)}|}{|Q_{l(t)}|}\mathbf{P}\bigg(\mu_t>\frac{\log B_{\alpha}(t)}{t}+\frac{\log x}{t}\bigg)=x^{-\alpha}.\]
\end{lemma}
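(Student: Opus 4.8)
The plan is to reduce the asymptotics of $\mathbf{P}(\mu_t > \cdot)$ to tail probabilities of the raw potential $\xi(0)$ via Lemma~\ref{trick1}, and then to extract the exact power-law exponent $-\alpha$ using Assumption F together with the definitions of $L_\alpha(t)$ and $B_\alpha(t)$. First I would set $h(t) := \tfrac{1}{t}\log B_\alpha(t) + \tfrac{1}{t}\log x$ and verify the hypothesis of Lemma~\ref{trick1}, namely that $|Q_{l(t)}|\,\mathbf{P}(\xi(0)>h(t)) \to 0$; this should hold because $\log B_\alpha(t) = t\widetilde h_{\alpha t} = t(h_{\alpha t}+\chi+o(1))$ grows fast enough that $h(t)$ exceeds the level $\psi(\log|Q_{l(t)}|)$ in the relevant regime, and $l(t)$ is only subexponential. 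Granting this, Lemma~\ref{trick1} gives
\[
\frac{|Q_{L_\alpha(t)}|}{|Q_{l(t)}|}\,\mathbf{P}\!\left(\mu_t>h(t)\right)
\sim \frac{|Q_{L_\alpha(t)}|}{|Q_{l(t)}|}\,|Q_{l(t)}|\,\mathbf{P}\!\left(\xi(0)>h(t)+\chi-\varepsilon(t)\right)
= |Q_{L_\alpha(t)}|\,\mathbf{P}\!\left(\xi(0)>h(t)+\chi-\varepsilon(t)\right),
\]
so the factor of $|Q_{l(t)}|$ cancels cleanly and the sub-box size disappears from the problem.

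The core of the argument is then to evaluate $|Q_{L_\alpha(t)}|\,\mathbf{P}(\xi(0)>h(t)+\chi-\varepsilon(t))$. I would rewrite this using $\varphi$: since $\mathbf{P}(\xi(0)>h)=\exp\{-\varphi(h)\}$ and $|Q_{L_\alpha(t)}|=\exp\{\varphi(h_{\alpha t})\}$ up to constants from the lattice-volume conventions, the product equals
\[
\exp\!\left\{\varphi(h_{\alpha t}) - \varphi\!\big(h(t)+\chi-\varepsilon(t)\big)\right\}.
\]
The argument of the second $\varphi$ is $h(t)+\chi-\varepsilon(t) = \tfrac{1}{t}\log B_\alpha(t) + \chi + \tfrac{1}{t}\log x - \varepsilon(t)$, and since $\tfrac{1}{t}\log B_\alpha(t) = h_{\alpha t}-\chi+o(1)$ by the definition of $B_\alpha(t)$, the leading term $h_{\alpha t}$ is recovered and the constants $\chi$ cancel; the perturbation is $\tfrac{1}{t}\log x$ plus $o(1)$ terms. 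The strategy is to expand $\varphi$ around $h_{\alpha t}$ and show that the difference of exponents converges to $-\alpha\log x$, which yields the claimed $x^{-\alpha}$.

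The main obstacle is controlling this difference of $\varphi$-values precisely enough to obtain the exact exponent $-\alpha$ rather than merely some power. The key input is Assumption F, which I would invoke in its inverted form: by the duality between $\varphi$ and its left-continuous inverse $\psi$, Assumption F encodes the asymptotic behaviour $\varphi'(h_{\alpha t}) \sim \alpha t$ near the level $h_{\alpha t}$, since $h_s$ is defined as the maximizer of $sh-\varphi(h)$ and hence satisfies $\varphi'(h_s)=s$ when $\varphi\in C^2$ is ultimately convex. Thus a first-order Taylor expansion gives
\[
\varphi\!\big(h_{\alpha t}+\tfrac{1}{t}\log x + o(\tfrac1t)\big) - \varphi(h_{\alpha t})
\approx \varphi'(h_{\alpha t})\cdot\tfrac{1}{t}\log x
= \alpha t\cdot\tfrac{1}{t}\log x = \alpha\log x,
\]
so that the exponent tends to $-\alpha\log x$ and the whole expression tends to $x^{-\alpha}$. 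The delicate points I expect to spend real care on are: justifying that the higher-order Taylor remainder times $t$ vanishes (this is where $\varphi\in C^2$ and the precise control of $\varphi''$ along $h_{\alpha t}$, supplied by ultimate convexity and Assumption F, are needed), and confirming that the built-in $o(1)$ error terms in both $B_\alpha(t)$ and $\varepsilon(t)$ contribute only $o(\log x)$ after multiplication by $\varphi'(h_{\alpha t})=\alpha t$. Indeed, the error term in $B_\alpha(t)$ is precisely the freedom that must be chosen so that these contributions wash out, which is why the paper reserves the phrase that it ``is chosen in a suitable way.''
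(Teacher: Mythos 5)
Your proposal follows essentially the same route as the paper's proof: reduce to tail probabilities of $\xi(0)$ via Lemma~\ref{trick1}, rewrite everything through $\varphi$, and extract the exponent $-\alpha\log x$ by a first-order Taylor expansion around $h_{\alpha t}$ using $\varphi'(h_{\alpha t})=\alpha t$ together with the definitions of $L_{\alpha}(t)$ and $B_{\alpha}(t)$, exactly as the paper does. The only differences are cosmetic: the paper controls the Taylor remainder explicitly via $\varphi''(h_{\alpha t})=1/h_{\alpha t}'=o(t^2)$ (a point you correctly flag as delicate but leave open), and the identity $\varphi'(h_{\alpha t})=\alpha t$ is simply the first-order condition for the maximizer $h_s$ of $sh-\varphi(h)$ rather than a consequence of Assumption F, as you yourself note in passing.
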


\begin{proof}
Lemma~\ref{trick1} and a first order Taylor expansion yield
\begin{align*}
 &\mathbf{P}\bigg(\mu_t>\frac{\log B_{\alpha}(t)}{t}+\frac{\log x}{t}\bigg)\sim|Q_{l(t)}|\mathbf{P}\bigg(\xi(0)>\frac{\log B_{\alpha}(t)}{t}+\frac{\log x}{t}+\chi+\varepsilon(t)\bigg)\\
=&\exp\bigg\{\log |Q_{l(t)}|-\varphi\bigg(\frac{\log B_{\alpha}(t)}{t}+\chi+o(1)\bigg)-\varphi'\bigg(\frac{\log B_{\alpha}(t)}{t}+\chi+o(1)\bigg)\frac{\log x}{t}+o(1)\bigg\}.
\end{align*}
Since $\varphi$ is ultimately convex and $\xi$ is unbounded from above we find that $\varphi''(h_{\alpha t})=1/h_{\alpha t}'=o(t^2).$ From this we can conclude that the error term in the Taylor expansion above vanishes asymptotically.
Moreover, by our choice of $l(t)$ and $B_{\alpha}(t)$ it follows that 
 \[\log |Q_{L_{\alpha}(t)}|=\varphi\bigg(\frac{\log B_{\alpha}(t)}{t}+\chi+o(1)\bigg)\qquad \text{and}\qquad
\lim_{t\to\infty}\frac{\varphi'\bigg(\frac{\log B_{\alpha}(t)}{t}+\chi+o(1)\bigg)}{t}=\alpha.\]

\end{proof}

\begin{proposition}\label{momente}
 Let Assumption F be satisfied and $\varphi\in C^2$ be ultimately convex. Then, for any $\tau>0$,
\begin{enumerate}
 \item if $p>\alpha$ then \[\lim\limits_{t\to\infty}\frac{|Q_{L_{\alpha}(t)}|}{|Q_{l(t)}|}\left<\frac{\mathrm{e}^{pt\mu_t}}{B_{\alpha}(t)^p}\mathbbm{1}_{\frac{\mathrm{e}^{t\mu_t}}{B_{\alpha}(t)}\leq\tau}\right>=\frac{\alpha}{p-\alpha}\tau^{p-\alpha}.\]
\item if $p<\alpha$ then \[\lim\limits_{t\to\infty}\frac{|Q_{L_{\alpha}(t)}|}{|Q_{l(t)}|}\left<\frac{\mathrm{e}^{pt\mu_t}}{B_{\alpha}(t)^p}\mathbbm{1}_{\frac{\mathrm{e}^{t\mu_t}}{B_{\alpha}(t)}>\tau}\right>=\frac{\alpha}{\alpha-p}\tau^{p-\alpha}.\]
\item if $p=\alpha$ then
\[\lim\limits_{t\to\infty}\frac{|Q_{L_{\alpha}(t)}|}{|Q_{l(t)}|}\left<\frac{\mathrm{e}^{pt\mu_t}}{B_{\alpha}(t)^p}\left(\mathbbm{1}_{\frac{\mathrm{e}^{t\mu_t}}{B_{\alpha}(t)}\leq\tau}-\mathbbm{1}_{\frac{\mathrm{e}^{t\mu_t}}{B_{\alpha}(t)}\leq1}\right)\right>=\alpha\log\tau.\]
\end{enumerate}

\end{proposition}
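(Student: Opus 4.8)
The plan is to deduce all three limits from the tail asymptotics of Lemma~\ref{alpha}. Writing $Y_t:=\mathrm{e}^{t\mu_t}/B_\alpha(t)$ and $N(t):=|Q_{L_\alpha(t)}|/|Q_{l(t)}|$, the event $\{Y_t>x\}$ coincides with $\{\mu_t>\frac{\log B_\alpha(t)}{t}+\frac{\log x}{t}\}$, so Lemma~\ref{alpha} reads $\lim_{t\to\infty}N(t)\mathbf{P}(Y_t>x)=x^{-\alpha}$ for every $x>0$. I would first rewrite each truncated moment as an integral of this tail. A layer-cake computation gives, for $p>0$,
\begin{align*}
N(t)\big\langle Y_t^p\mathbbm{1}_{Y_t\le\tau}\big\rangle&=-\tau^pN(t)\mathbf{P}(Y_t>\tau)+\int_0^\tau py^{p-1}N(t)\mathbf{P}(Y_t>y)\,\mathrm{d}y,\\
N(t)\big\langle Y_t^p\mathbbm{1}_{Y_t>\tau}\big\rangle&=\tau^pN(t)\mathbf{P}(Y_t>\tau)+\int_\tau^\infty py^{p-1}N(t)\mathbf{P}(Y_t>y)\,\mathrm{d}y,
\end{align*}
and, for $p=\alpha$ and $\tau>1$ (the case $\tau<1$ being symmetric),
\[N(t)\big\langle Y_t^\alpha(\mathbbm{1}_{Y_t\le\tau}-\mathbbm{1}_{Y_t\le1})\big\rangle=N(t)\big[\mathbf{P}(Y_t>1)-\tau^\alpha\mathbf{P}(Y_t>\tau)\big]+\int_1^\tau\alpha y^{\alpha-1}N(t)\mathbf{P}(Y_t>y)\,\mathrm{d}y.\]
Once the limit may be passed under the integral sign, the pointwise convergence $N(t)\mathbf{P}(Y_t>y)\to y^{-\alpha}$ turns each right-hand side into an elementary integral of $y^{p-\alpha-1}$: in case~i) one obtains $-\tau^{p-\alpha}+\frac{p}{p-\alpha}\tau^{p-\alpha}=\frac{\alpha}{p-\alpha}\tau^{p-\alpha}$; in case~ii), $\tau^{p-\alpha}+\frac{p}{\alpha-p}\tau^{p-\alpha}=\frac{\alpha}{\alpha-p}\tau^{p-\alpha}$; and in case~iii) the two boundary terms cancel while $\int_1^\tau\alpha y^{-1}\,\mathrm{d}y=\alpha\log\tau$ survives. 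These are exactly the claimed values.

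The crux is thus to justify exchanging limit and integral, i.e.\ to produce a $t$-uniform, $y$-integrable majorant for $N(t)\mathbf{P}(Y_t>y)$, and here the convexity of $\varphi$ does the work. By the first line of the proof of Lemma~\ref{trick1} and a union bound, $\mathbf{P}(Y_t>y)=\mathbf{P}(\xi^{\ssup{1}}_{Q_{l(t)}}>\frac{\log B_\alpha(t)}{t}+\frac{\log y}{t}+\chi-\varepsilon(t))\le|Q_{l(t)}|\exp\{-\varphi(a_t+\frac{\log y}{t})\}$ with $a_t:=\frac{\log B_\alpha(t)}{t}+\chi-\varepsilon(t)$, so that, using $N(t)|Q_{l(t)}|=|Q_{L_\alpha(t)}|$,
\[N(t)\mathbf{P}(Y_t>y)\le|Q_{L_\alpha(t)}|\exp\Big\{-\varphi\big(a_t+\tfrac{\log y}{t}\big)\Big\}.\]
As recorded in the proof of Lemma~\ref{alpha}, $a_t\to\infty$, $|Q_{L_\alpha(t)}|\exp\{-\varphi(a_t)\}\to1$ and $\varphi'(a_t)/t\to\alpha$. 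Since $\varphi$ is ultimately convex, the tangent-line inequality $\varphi(a_t+\frac{\log y}{t})\ge\varphi(a_t)+\varphi'(a_t)\frac{\log y}{t}$ holds for all large $t$ as long as $a_t+\frac{\log y}{t}$ stays in the convex range, giving $N(t)\mathbf{P}(Y_t>y)\le C\,y^{-\varphi'(a_t)/t}$. Fixing a small $\eta>0$, the convergence $\varphi'(a_t)/t\to\alpha$ turns this into the uniform bounds $N(t)\mathbf{P}(Y_t>y)\le C\,y^{-(\alpha-\eta)}$ for $y\ge1$ and $N(t)\mathbf{P}(Y_t>y)\le C\,y^{-(\alpha+\eta)}$ for $y\in[y_0(t),1]$. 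In case~ii) the first bound dominates the integrand by $C\,y^{p-1-(\alpha-\eta)}$, integrable at infinity once $\eta<\alpha-p$; in case~iii) the range $[1,\tau]$ is compact and bounded away from $0$ and $\infty$, so dominated convergence is immediate and the boundary terms converge directly by Lemma~\ref{alpha}.

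I expect the genuinely delicate point to be case~i) near $y=0$, where the tangent-line estimate degenerates once $a_t+\frac{\log y}{t}$ leaves the convex range, that is for $y$ below a super-exponentially small threshold $y_0(t)=\exp\{-t(a_t-h_0)\}$, with $h_0$ a convexity threshold of $\varphi$. On $[y_0(t),\tau]$ the bound $N(t)\mathbf{P}(Y_t>y)\le C\,y^{-(\alpha+\eta)}$ dominates the integrand by $C\,y^{p-1-(\alpha+\eta)}$, which is integrable near $0$ once $\eta<p-\alpha$, so dominated convergence applies there; on $(0,y_0(t))$ I would use only the trivial bound $N(t)\mathbf{P}(Y_t>y)\le N(t)$, whose contribution is at most $N(t)\,y_0(t)^p$. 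Because $\log N(t)\le\log|Q_{L_\alpha(t)}|=\varphi(a_t)+o(1)\le\alpha t\,a_t(1+o(1))$—using the Legendre identity $\varphi(h_{\alpha t})=\alpha t\,h_{\alpha t}-H(\alpha t)+o(t)$ together with $H(\alpha t)\ge0$ for large $t$—while $\log y_0(t)^p=-pt(a_t-h_0)$, one gets $\log\big(N(t)y_0(t)^p\big)\le(\alpha-p)t\,a_t+o(t\,a_t)\to-\infty$ precisely because $p>\alpha$; hence this sliver is negligible and case~i) follows. Finally, the layer-cake identities require $\langle Y_t^p\rangle<\infty$ for each fixed $t$, which holds because $\lambda_1\le\xi^{\ssup{1}}_{Q_{l(t)}}$ and $\xi(0)$ has finite exponential moments of all orders; and the $o(1)$ errors in $a_t$ perturb the exponent only by a further $o(1)$—controlled via $\varphi''(h_{\alpha t})=1/h_{\alpha t}'=o(t^2)$ as in Lemma~\ref{alpha}—so they are absorbed into $\eta$.
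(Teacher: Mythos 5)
Your proposal is correct, and it reaches the three limits by a route that is genuinely different in execution from the paper's. The paper proves Proposition~\ref{momente} by integrating by parts against the distribution function of $\mu_t$ and then running the Laplace method directly on the resulting integral $pt\int_0^{\widetilde h_{\alpha t}+\log\tau/t}\mathrm{e}^{tpx-\widetilde\varphi(x)}\,\mathrm{d}x$: a first-order Taylor expansion of $\widetilde\varphi$ around $\widetilde h_{\alpha t}$ (claimed uniformly in $\tau$) together with the substitution $x=\widetilde h_{\alpha t}+\frac{\log\tau}{t}u$ produces the constant $\frac{p}{p-\alpha}$ and the prefactor $\mathrm{e}^{pt\widetilde h_{\alpha t}-\widetilde\varphi(\widetilde h_{\alpha t})}$, which the normalisation by $B_\alpha(t)^p$ and $|Q_{L_\alpha(t)}|/|Q_{l(t)}|$ then cancels. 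You instead treat Lemma~\ref{alpha} as a black box --- the tail limit $N(t)\mathbf{P}(Y_t>y)\to y^{-\alpha}$ in your notation $Y_t=\mathrm{e}^{t\mu_t}/B_\alpha(t)$, $N(t)=|Q_{L_\alpha(t)}|/|Q_{l(t)}|$ --- and convert it into truncated-moment limits by the classical regular-variation mechanism: layer-cake identities (whose boundary terms and elementary integrals do give exactly $\frac{\alpha}{p-\alpha}\tau^{p-\alpha}$, $\frac{\alpha}{\alpha-p}\tau^{p-\alpha}$ and $\alpha\log\tau$) plus dominated convergence, with the majorant $Cy^{-(\alpha\pm\eta)}$ manufactured from the union bound $\mathbf{P}(\xi^{\ssup 1}_{Q_{l(t)}}>z)\le|Q_{l(t)}|\mathrm{e}^{-\varphi(z)}$ and the tangent-line inequality for the ultimately convex $\varphi$, and with the sub-threshold sliver $(0,y_0(t))$ disposed of by the crude bound $N(t)y_0(t)^p\to0$ --- precisely the step where $p>\alpha$ is needed. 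Your version is more modular (it uses only the statement of Lemma~\ref{alpha}, not its proof technique) and it makes explicit the interchange-of-limits justification that the paper compresses into the phrase ``uniformly in $\tau$''; it is also somewhat more robust to the various $o(1)$ error terms inherited from Lemma~\ref{trick1}, since you only ever need one-sided bounds with exponents $\alpha\pm\eta$ for arbitrary small fixed $\eta$, rather than exact asymptotics of $\widetilde\varphi$ at perturbed arguments. What the paper's computation buys in exchange is brevity and self-containedness: it evaluates the integral in one stroke and never has to isolate a dominating function or split off a small-$y$ region.
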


\begin{proof}
 \begin{enumerate}
  \item Integration by parts yields
\begin{align*}
\left< \mathrm{e}^{pt\mu_t}\mathbbm{1}_{\frac{\mathrm{e}^{t\mu_t}}{B_{\alpha}(t)}\leq\tau}\right>
=& \int\limits_0^{\widetilde{h}_{\alpha t}+\frac{\log\tau}{t}} \mathrm{e}^{tpx}\, \text{d}\left(1-\bar{F}_{\mu_t}(x)\right)\\
=&-\left[\mathrm{e}^{tpx-\widetilde{\varphi}(x)}\right]_{x=0}^{x=\widetilde{h}_{\alpha t}+\frac{\log\tau}{t}}+pt\int\limits_0^{\widetilde{h}_{\alpha t}+\frac{\log\tau}{t}} \mathrm{e}^{tpx-\widetilde{\varphi}(x)}\, \text{d}x.
\end{align*}
Here $\bar{F}_{\mu_t}$ denotes the tail distribution function of $\mu_t$.
Similarly as in the proof of Lemma~\ref{alpha} we find with the help of a first order Taylor expansion of $\varphi$ that uniformly in $\tau$,
\[\widetilde{\varphi}\big(\widetilde{h}_{\alpha t}+\frac{\log\tau}{t}\big)\sim\widetilde{\varphi}(\widetilde{h}_{\alpha t})-\alpha\log\tau,\qquad t\to\infty. 
\]
Substituting $x=\widetilde{h}_{\alpha t}+\frac{\log\tau}{t}u$ for $\tau\neq1$ we find that 
$$
\begin{aligned}
pt\int\limits_0^{\widetilde{h}_{\alpha t}+\frac{\log\tau}{t}} \mathrm{e}^{tpx-\widetilde{\varphi}(x)}\,\text{d}x
&\sim \mathrm{e}^{pt\widetilde{h}_{\alpha t}-\widetilde{\varphi}(\widetilde{h}_{\alpha t})}p\log\tau\int\limits^1_{-\infty\cdot\text{sign} \log\tau} \mathrm{e}^{u(p-\alpha)\log\tau }\,\text{d}u\\ &\sim\frac{p}{p-\alpha}\mathrm{e}^{pt\widetilde{h}_{\alpha t}-\widetilde{\varphi}(\widetilde{h}_{\alpha t})+(p-\alpha)\log\tau},\qquad t\to\infty.
\end{aligned}
$$
Altogether this proves the claim.
\item and iii) follow similarly.
 \end{enumerate}
\end{proof}
\noindent
Overall we find:
\begin{theorem}\label{mu}
Let Assumption F be satisfied and $\varphi\in C^2$ be ultimately convex. Then for $\alpha\in(0,2)$,
 \[\sum\limits_{i\colon Q_{l(t)}^{\ssup i}\subset Q_{L_{\alpha}(t)}}\frac{\mathrm{e}^{t\mu_t^{\ssup i}}-\widetilde{A}(t)}{B_{\alpha}(t)}\stackrel{t\to\infty}{\Longrightarrow}\mathcal{F_\alpha},\]
with
\[\widetilde{A}(t)=\begin{cases}
        0, &\text{if } \alpha\in(0,1),\\
\left<\mathrm{e}^{t\mu_t}\right>,  &\text{if } \alpha\in(1,2)\\
\left<\mathrm{e}^{t\mu_t}\mathbbm{1}_{\mu_t\leq 1}\right>, \quad &\text{if } \alpha=1.     \end{cases}
\]
\end{theorem}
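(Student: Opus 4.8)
The plan is to exploit that, for each fixed $t$, the variables $\mathrm{e}^{t\mu_t^{\ssup i}}/B_\alpha(t)$ indexed by the boxes $Q_{l(t)}^{\ssup i}\subset Q_{L_\alpha(t)}$ are independent and identically distributed, since the $Q_{l(t)}^{\ssup i}$ are disjoint and $\xi$ is i.i.d., so that each $\mu_t^{\ssup i}$ is a functional of $\xi$ restricted to its own box. Writing $N(t):=\lfloor|Q_{L_\alpha(t)}|/|Q_{l(t)}|\rfloor\to\infty$, the left-hand side is thus the row sum of a triangular array of row-wise i.i.d.\ summands, centred by $\widetilde{A}(t)/B_\alpha(t)$. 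I would first record that the floor and the finitely many boundary boxes not contained in $Q_{L_\alpha(t)}$ only alter $N(t)$ by a factor tending to $1$ and may be ignored. The convergence of such centred row sums to an infinitely divisible law with characteristic function \eqref{stablechar} is then exactly the classical criterion of \cite[Chapter IV]{P75}, so the whole task reduces to verifying the four parts of Condition P and identifying the resulting triple $(a,\sigma^2,\widetilde{L})$.

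Items i) and ii) follow at once from Lemma~\ref{alpha}. Writing $X_t:=\mathrm{e}^{t\mu_t}/B_\alpha(t)$ and rewriting the event $\{\mu_t>\log B_\alpha(t)/t+\log x/t\}$ as $\{X_t>x\}$, the lemma gives $N(t)\,\mathbf{P}(X_t>x)\to x^{-\alpha}$ for $x>0$, whence the L\'evy spectral function is $\widetilde{L}(x)=-x^{-\alpha}$ on $(0,\infty)$ and $\widetilde{L}\equiv0$ on $(-\infty,0)$, i.e.\ $\mathrm{d}\widetilde{L}(x)=\alpha x^{-\alpha-1}\mathbbm{1}_{x>0}\,\mathrm{d}x$; this is one-sided, reflecting that the summands are positive. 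Infinite smallness is immediate since $\mathbf{P}(X_t\ge\varepsilon)=\varepsilon^{-\alpha}/N(t)\,(1+o(1))\to0$.

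For item iii) I would apply Proposition~\ref{momente} i) with $p=2$ (admissible as $\alpha<2$): $N(t)\langle X_t^2\mathbbm{1}_{X_t\le\tau}\rangle\to\frac{\alpha}{2-\alpha}\tau^{2-\alpha}$, while the squared-mean part $N(t)\langle X_t\mathbbm{1}_{X_t\le\tau}\rangle^2$ of the truncated variance is of lower order and vanishes; letting $\tau\to0$ gives $\sigma^2=0$, so there is no Gaussian component, as required for a pure $\alpha$-stable limit with $\alpha<2$. Item iv) is the delicate one, where the three choices of $\widetilde{A}(t)$ enter. For $\alpha\in(0,1)$ one takes $\widetilde{A}(t)=0$ and reads off the truncated mean from Proposition~\ref{momente} i) with $p=1>\alpha$; for $\alpha\in(1,2)$ one centres by the full mean $\widetilde{A}(t)=\langle\mathrm{e}^{t\mu_t}\rangle$ and combines Proposition~\ref{momente} i) (the truncated part) with ii) (the tail beyond $\tau$) to offset the now-divergent uncentred mean; and for the boundary case $\alpha=1$ one uses the level-one truncated centring together with Proposition~\ref{momente} iii), whose limit $\alpha\log\tau$ supplies exactly the logarithmic term in $\phi_1$. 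In each regime the constant $a$ is then obtained by inserting $\mathrm{d}\widetilde{L}(x)=\alpha x^{-\alpha-1}\mathrm{d}x$ and evaluating the elementary integrals $\int_0^\tau\tfrac{x^3}{1+x^2}\,\mathrm{d}\widetilde{L}(x)$ and $\int_\tau^\infty\tfrac{x}{1+x^2}\,\mathrm{d}\widetilde{L}(x)$, whose $\tau$-dependence must cancel against that of the truncated mean.

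It remains to identify the infinitely divisible law with $\mathcal{F}_\alpha$. With $\sigma^2=0$, $\mathrm{d}\widetilde{L}(x)=\alpha x^{-\alpha-1}\mathbbm{1}_{x>0}\,\mathrm{d}x$ and the drift $a$ just determined, I would substitute into \eqref{stablechar} and verify by the standard evaluation of $\int_0^\infty(\mathrm{e}^{iux}-1-\tfrac{iux}{1+x^2})\,\alpha x^{-\alpha-1}\,\mathrm{d}x$ that the characteristic function coincides with $\phi_\alpha$. The main obstacle is the bookkeeping of the centring across the three regimes---in particular $\alpha=1$, where the truncation in $\widetilde{A}(t)$ must be matched exactly with the level-$\tau$ truncation in item iv) so that the divergent contributions cancel and only the logarithmic correction of $\phi_1$ survives; the convergence machinery itself is classical once the i.i.d.\ reduction and the three auxiliary estimates are in hand.
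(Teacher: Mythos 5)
Your proposal is correct and takes essentially the same route as the paper: the paper's proof likewise treats the $\mu_t^{\ssup i}$ as row-wise i.i.d., verifies Condition P items i)--ii) via Lemma~\ref{alpha} and items iii)--iv) via Proposition~\ref{momente}, concludes $\sigma^2=0$ and $\widetilde{L}(x)=-x^{-\alpha}$, and identifies the limit as $\mathcal{F}_\alpha$. The only difference is presentational: where you evaluate the L\'evy--Khintchine integrals and the centring constant $a$ by hand, the paper delegates these computations to \cite[Proposition 6.4]{BABM05} and obtains stability of the limit from \cite[Theorem IV.12]{P75}.
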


\begin{proof}
Since the $u^{\ssup i}$ are i.i.d., the $\mu_t^{\ssup i}$ are as well. Hence, we have to check the four points of Condition P. Items i) and ii) follow from Lemma~\ref{alpha}. We find that $\widetilde{L}(x)=x^{-\alpha}$. It follows from Proposition~\ref{momente} that $\sigma^2=0$. Furthermore, Proposition~\ref{momente} together with \cite[Proposition 6.4]{BABM05} yields the constant $a$ from which we can deduce $\phi$. The stability of the limit law follows from \cite[Theorem IV.12]{P75} since $\sigma^2=0$ and $\widetilde{L}(x)=x^{-\alpha}$.
\end{proof}

\begin{remark}
 An infinitely divisible law with characteristic function as in \eqref{stablechar} is stable if and only if either $\widetilde{L}\equiv0$ or $\sigma^2=0$ and $\widetilde{L}(x)=cx^{-\alpha}$, $c>0$, $\alpha\in(0,2)$, see \cite[Theorem IV.12]{P75}.  
\end{remark}
\noindent
We extend the functions $u^{\ssup i}$ to a function $\widetilde u\colon Q_{L_\alpha(t)}\to[0,\infty)$ by putting $\widetilde u(t,x)= u^{\ssup i}(t,x)$ for $x\in Q_{l(t)}^{\ssup i}$. Now it remains to show that \\
\noindent
\begin{minipage}[t]{7.5cm}
\begin{align*}
 &\sum_{x\in Q_{L_{\alpha}(t)}}\frac{u(t,x)-A(t)}{B_{\alpha}(t)}\qquad \text {  and}\\
&\sum_{x\in Q_{L_{\alpha}(t)}}\frac{\widetilde{u}(t,x)-\widetilde{A}(t)/|Q_{l(t)}|}{B_{\alpha}(t)}\\
=&\sum_{i\colon Q_{l(t)}^{\ssup i}\subset Q_{L_{\alpha}(t)}}\frac{\exp\{t\mu_t^{\ssup i}\}-\widetilde{A}(t)}{B_{\alpha}(t)}
\end{align*}
 have the same $\alpha$-stable limit distribution. To this end let  $\mathcal{I}_t^c=\bigcup\limits_{i\colon Q_{l(t)}^{\ssup i}\subset Q_{L_{\alpha}(t)}} Q_{l(t)}^{\ssup i}\setminus Q_{l(t)(1-1/t)}^{\ssup i}$ 
and $\mathcal{I}_t=Q_{L_{\alpha}(t)}\setminus\mathcal{I}_t^c$. 
\end{minipage}
\begin{minipage}[t]{8cm}
\setlength{\unitlength}{0,4cm}
\begin{picture}(20,16)

\put(1,1){\line(1,0){16}}
\put(1,1){\line(0,1){16}}
\put(1,17){\line(1,0){16}} 
\put(17,1){\line(0,1){16}}
\put(1,9){\line(1,0){16}}
\put(9,1){\line(0,1){16}}

\dashline{0,5}(2,2)(2,8)
\dashline{0,5}(2,2)(8,2)
\dashline{0,5}(2,8)(8,8)
\dashline{0,5}(8,2)(8,8)

\dashline{0,5}(2,10)(2,16)
\dashline{0,5}(2,10)(8,10)
\dashline{0,5}(2,16)(8,16)
\dashline{0,5}(8,10)(8,16)

\dashline{0,5}(10,2)(16,2)
\dashline{0,5}(10,2)(10,8)
\dashline{0,5}(16,2)(16,8)
\dashline{0,5}(10,8)(16,8)

\dashline{0,5}(10,10)(10,16)
\dashline{0,5}(10,10)(16,10)
\dashline{0,5}(10,16)(16,16)
\dashline{0,5}(16,10)(16,16)

\put(1,1){\line(1,1){1}}
\put(2,1){\line(1,1){1}}
\put(3,1){\line(1,1){1}}
\put(4,1){\line(1,1){1}}
\put(5,1){\line(1,1){1}}
\put(6,1){\line(1,1){1}}
\put(7,1){\line(1,1){1}}
\put(8,1){\line(1,1){1}}
\put(9,1){\line(1,1){1}}
\put(10,1){\line(1,1){1}}
\put(11,1){\line(1,1){1}}
\put(12,1){\line(1,1){1}}
\put(13,1){\line(1,1){1}}
\put(14,1){\line(1,1){1}}
\put(15,1){\line(1,1){1}}
\put(16,1){\line(1,1){1}}

\put(1,8){\line(1,1){2}}
\put(2,8){\line(1,1){2}}
\put(3,8){\line(1,1){2}}
\put(4,8){\line(1,1){2}}
\put(5,8){\line(1,1){2}}
\put(6,8){\line(1,1){2}}
\put(7,8){\line(1,1){2}}
\put(8,8){\line(1,1){2}}
\put(9,8){\line(1,1){2}}
\put(10,8){\line(1,1){2}}
\put(11,8){\line(1,1){2}}
\put(12,8){\line(1,1){2}}
\put(13,8){\line(1,1){2}}
\put(14,8){\line(1,1){2}}
\put(15,8){\line(1,1){2}}
\put(1,9){\line(1,1){1}}

\put(1,16){\line(1,1){1}}
\put(2,16){\line(1,1){1}}
\put(3,16){\line(1,1){1}}
\put(4,16){\line(1,1){1}}
\put(5,16){\line(1,1){1}}
\put(6,16){\line(1,1){1}}
\put(7,16){\line(1,1){1}}
\put(8,16){\line(1,1){1}}
\put(9,16){\line(1,1){1}}
\put(10,16){\line(1,1){1}}
\put(11,16){\line(1,1){1}}
\put(12,16){\line(1,1){1}}
\put(13,16){\line(1,1){1}}
\put(14,16){\line(1,1){1}}
\put(15,16){\line(1,1){1}}
\put(16,16){\line(1,1){1}}

\put(1,1){\line(1,1){1}}
\put(1,2){\line(1,1){1}}
\put(1,3){\line(1,1){1}}
\put(1,4){\line(1,1){1}}
\put(1,5){\line(1,1){1}}
\put(1,6){\line(1,1){1}}
\put(1,7){\line(1,1){1}}
\put(1,8){\line(1,1){1}}
\put(1,9){\line(1,1){1}}
\put(1,10){\line(1,1){1}}
\put(1,11){\line(1,1){1}}
\put(1,12){\line(1,1){1}}
\put(1,13){\line(1,1){1}}
\put(1,14){\line(1,1){1}}
\put(1,15){\line(1,1){1}}
\put(1,16){\line(1,1){1}}

\put(16,1){\line(1,1){1}}
\put(16,2){\line(1,1){1}}
\put(16,3){\line(1,1){1}}
\put(16,4){\line(1,1){1}}
\put(16,5){\line(1,1){1}}
\put(16,6){\line(1,1){1}}
\put(16,7){\line(1,1){1}}
\put(16,8){\line(1,1){1}}
\put(16,9){\line(1,1){1}}
\put(16,10){\line(1,1){1}}
\put(16,11){\line(1,1){1}}
\put(16,12){\line(1,1){1}}
\put(16,13){\line(1,1){1}}
\put(16,14){\line(1,1){1}}
\put(16,15){\line(1,1){1}}
\put(16,16){\line(1,1){1}}

\put(8,1){\line(1,1){2}}
\put(8,2){\line(1,1){2}}
\put(8,3){\line(1,1){2}}
\put(8,4){\line(1,1){2}}
\put(8,5){\line(1,1){2}}
\put(8,6){\line(1,1){2}}
\put(8,7){\line(1,1){2}}
\put(8,8){\line(1,1){2}}
\put(8,9){\line(1,1){2}}
\put(8,10){\line(1,1){2}}
\put(8,11){\line(1,1){2}}
\put(8,12){\line(1,1){2}}
\put(8,13){\line(1,1){2}}
\put(8,14){\line(1,1){2}}
\put(8,15){\line(1,1){2}}

\put(18,10){\vector(0,1){7}}
\put(18,8){\vector(0,-1){7}}
\put(17,1){\makebox(3,16)[l]{\LARGE{$L_\alpha(t)$}}}

\put(11,6){\vector(0,1){3}}
\put(11,4){\vector(0,-1){3}}
\put(10,2){\makebox(6,6)[l]{\Large{$l(t)$}}}

\put(3,6){\vector(0,1){2}}
\put(3,4){\vector(0,-1){2}}
\put(2,3){\makebox(4,4)[l]{$l(t)(1-1/t)$}}
\put(8,8){\makebox(2,2)[c]{\huge{$\mathcal{I}_t^c$}}}

\end{picture}
\end{minipage}

Notice that
\[u(t,x)-\widetilde{u}(t,x)=\E_x\exp\Bigg\{\int\limits_0^\infty \xi(X_s)\,\text{d}s\Bigg\}\mathbbm{1}_{\tau_{(Q_{l(t)}^{\ssup i})^c}\leq t},\qquad(t,x)\in[0,\infty)\times Q_{l(t)}^{\ssup i}.\]
In the next lemma we show that those paths of the random walk in the Feynman-Kac formula that start in $\mathcal{I}_t$ and leave $Q_{l(t)}$ up to time $t$ are asymptotically negligible.
\begin{lemma}\label{uniform}
Almost surely,
\[\lim\limits_{t\to\infty}\sup\limits_{x\in Q_{l(t)(1-1/t)}}\E_x\exp\Bigg\{\int\limits_0^\infty \xi(X_s)\,\text{d}s\Bigg\}\mathbbm{1}_{\tau_{\left(Q_{l(t)}\right)^c}\leq t}=0.\]
\end{lemma}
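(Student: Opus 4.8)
The plan is to read the quantity in the statement as the part of the full time-$t$ solution that is carried by those random walk paths which leave the box $Q_{l(t)}$ before time $t$, and to show that the rarity of such an exit outweighs the Feynman--Kac mass it carries. By the identity recorded just above the lemma, the expectation in the statement equals $u(t,x)-\widetilde u(t,x)$. Writing $\tau:=\tau_{(Q_{l(t)})^c}$ and applying the strong Markov property at $\tau$, I would rewrite it as
\[
\E_x\Big[\exp\Big\{\int_0^\tau\xi(X_s)\,\mathrm{d}s\Big\}\,u(t-\tau,X_\tau)\,\mathbbm{1}_{\tau\le t}\Big].
\]
In this form the evolution of the walk after it has left $Q_{l(t)}$ --- in particular everything happening beyond time $t$ --- enters only through the finite solution factor $u(t-\tau,X_\tau)$, so the post-exit part of the weight is accounted for rather than discarded. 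It then suffices to bound, uniformly in $x\in Q_{l(t)(1-1/t)}$, the exit probability together with the two factors $\exp\{\int_0^\tau\xi(X_s)\,\mathrm{d}s\}$ and $u(t-\tau,X_\tau)$.

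First I would bound the exit probability. For $x$ in the interior $Q_{l(t)(1-1/t)}$ the walk must make at least $l(t)/t$ jumps before time $t$ in order to reach $\partial Q_{l(t)}$; since the number $N_t$ of jumps up to time $t$ is Poisson with mean $2d\kappa t$, a Poisson tail estimate gives
\[
\sup_{x\in Q_{l(t)(1-1/t)}}\P_x(\tau\le t)\le\P\big(N_t\ge l(t)/t\big)\le\exp\Big\{-c\,\tfrac{l(t)}{t}\log\log t\Big\}
\]
for some $c>0$, where the choice $l(t)\ge t^2\log^2 t$ makes $l(t)/t\ge t\log^2 t$ and hence the bound super-exponentially small in $t$. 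Next I would control the two weight factors. Before $\tau$ the walk stays in $Q_{l(t)}$ and $\tau\le t$, so $\exp\{\int_0^\tau\xi(X_s)\,\mathrm{d}s\}\le\exp\{t\,\xi^{\ssup 1}_{Q_{l(t)}}\}$; and since $X_\tau\in\partial Q_{l(t)}$ with $t-\tau\le t\ll l(t)$, \eqref{gm98} yields $u(t-\tau,X_\tau)\le\exp\{t(\xi^{\ssup 1}_{Q_{2l(t)}}-\chi+o(1))\}$. The decisive point is that both factors feel only the local maximum $\xi^{\ssup 1}_{Q_{2l(t)}}$ over the single small box (and a neighbourhood of radius $O(t)\ll l(t)$), and not the global maximum over $Q_{L_{\alpha}(t)}$. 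By the almost sure extreme-value bound $\xi^{\ssup 1}_{Q_{2l(t)}}\le\Psi_t:=\psi\big((1+o(1))\log|Q_{2l(t)}|\big)$, coming from a Borel--Cantelli argument using $\mathbf{P}(\xi(0)>\psi(s))=\mathrm{e}^{-(1+o(1))s}$, and because $\log|Q_{2l(t)}|$ is only of order $\log l(t)$, the quantity $\Psi_t$ is polylogarithmic in $t$ (of order $(\log t)^{1/\gamma}$ for Weibull and $\rho\log\log t$ for double-exponential tails). Thus both weight factors are at most $\exp\{t\Psi_t\}$ almost surely.

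Combining the three bounds, the supremum over $x$ is at most $\exp\{t\Psi_t-c\,(l(t)/t)\log\log t\}$, which tends to $0$ because $(l(t)/t)\log\log t\ge t\log^2 t\,\log\log t$ dominates $t\Psi_t=t\,\psi((1+o(1))\log l(t))$; in the regime where $l(t)=H(4t)$ the same comparison holds, the gain in the exponent of the exit probability still outgrowing $t\Psi_t$. To pass from this deterministic decay to the almost sure statement for continuous $t$, I would run the extreme-value Borel--Cantelli step along the integers, where $\sum_n|Q_{2l(n)}|^{-\varepsilon}<\infty$ makes it applicable, and interpolate between consecutive integers using the monotonicity of $t\mapsto\xi^{\ssup 1}_{Q_{2l(t)}}$ and of the exit bound. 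I expect the main obstacle to be precisely this localisation: one must guarantee that an exiting path accumulates Feynman--Kac mass according to the modest local maximum over the subbox of radius $\sim l(t)$ rather than the extreme value of order $h_{\alpha t}$ over the whole box $Q_{L_{\alpha}(t)}$ --- the latter would swamp the exit probability --- so that confining both the pre-exit weight and the post-exit solution factor to $Q_{2l(t)}$ is what lets the super-exponentially small exit probability be decisive.
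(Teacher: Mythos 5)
Your proposal is correct and is essentially the paper's own argument: the paper likewise bounds the Feynman--Kac weight on the exit event by $\exp\{t\,\xi^{\ssup 1}_{Q_{l(t)}}\}$, which is $\exp\{t\cdot o(\log|Q_{l(t)}|)\}$ almost surely by the same extreme-value/Borel--Cantelli reasoning you use (resting on $\psi(s)=o(s)$, valid since all exponential moments are finite), and multiplies it by the probability that the walk crosses a layer of width $l(t)/t$ before time $t$, controlled by the same Poisson jump-count tail estimate (quoted from \cite[Lemma 2.5 and Corollary 2.7]{GM98}), after which the choice $l(t)\geq t^2\log^2 t$ makes the exit probability win exactly as in your final comparison. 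The only real difference is bookkeeping: your strong-Markov split at $\tau$ keeps the post-exit factor $u(t-\tau,X_\tau)$ explicit --- which is, if anything, more careful than the paper's one-line bound $\exp\{t\,\xi^{\ssup 1}_{Q_{l(t)}}\}\,\P_{0}\big(\tau_{(Q_{l(t)/t})^c}\leq t\big)$, though to make that step fully rigorous you would need a version of \eqref{gm98} uniform over the starting sites $X_\tau\in\partial Q_{l(t)}$ and over times $t-\tau\in[0,t]$, not just the fixed-site asymptotics as stated.
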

\begin{proof}
We find that
\begin{align*}
&\sup\limits_{x\in Q_{l(t)(1-1/t)}}\E_x\exp\Bigg\{\int\limits_0^\infty \xi(X_s)\,\text{d}s\Bigg\}\mathbbm{1}_{\tau_{\left(Q_{l(t)}\right)^c}\leq t}\\
&\leq \exp\left\{t\sup\limits_{x\in Q_{l(t)}}\xi(x)\right\}\P_{0}\left(\tau_{\left(Q_{l(t)/t}\right)^c}\leq t\right)\\
&\leq2^{d+1}\exp\left\{t\cdot o\Big(\log\left(|Q_{l(t)}|\right)\Big)-|Q_{l(t)/t}|\log\left(\frac{|Q_{l(t)/t}|}{d\kappa t}\right)\right\}\stackrel{t\to\infty}{\longrightarrow} 0.
\end{align*}
In the last inequality we use \cite[Lemma 2.5 and Corollary 2.7]{GM98}.
\end{proof}
\begin{lemma}\label{cut}
 For all $\varepsilon>0$,
\begin{enumerate}
 \item if $\alpha\in(0,1]$ then
\[\lim_{t\to\infty}\mathbf{P}\Bigg(\frac{1}{B_{\alpha}(t)}\sum_{x\in Q_{L_{\alpha}(t)}}\big[u(t,x)-\widetilde{u}(t,x)\big]>\varepsilon\Bigg)=0.\]
\item if $\alpha\in[1,2)$ then
\[\lim_{t\to\infty}\mathbf{P}\Bigg(\frac{1}{B_{\alpha}(t)}\sum_{x\in Q_{L_{\alpha}(t)}}\big[u(t,x)-\left<u(t,x)\right>-\widetilde{u}(t,x)+\left<\widetilde{u}(t,x)\right>\big]>\varepsilon\Bigg)=0.\]
\item if $\alpha=1$ then
\[\lim_{t\to\infty}\mathbf{P}\Bigg(\sum_{x\in Q_{L_{\alpha}(t)}}\frac{u(t,x)-\left<u(t,x)\mathbbm{1}_{u(t,x)\leq B_{\alpha}(t)}\right>-\widetilde{u}(t,x)+\left<\widetilde{u}(t,x)\mathbbm{1}_{\widetilde{u}(t,x)\leq B_{\alpha}(t)}\right>}{B_{\alpha}(t)}>\varepsilon\Bigg)=0.\]
\end{enumerate}
\end{lemma}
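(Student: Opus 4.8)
The plan is to prove the lemma by bridging the full solution $u$ and its decoupled approximation $\widetilde u$, whose rescaled sum already has the desired $\alpha$-stable limit by Theorem~\ref{mu}. Since $0\le u(t,x)-\widetilde u(t,x)\le u(t,x)$ on each subbox $Q_{l(t)}^{\ssup i}$, it suffices to control $\sum_{x\in Q_{L_{\alpha}(t)}}\big[u(t,x)-\widetilde u(t,x)\big]$, centred as prescribed. I would split this sum along the decomposition $Q_{L_{\alpha}(t)}=\mathcal I_t\sqcup\mathcal I_t^c$ into the interior $\mathcal I_t$ and the boundary layer $\mathcal I_t^c$ of relative width $1/t$, and treat the two regions by completely different means.

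\textbf{Interior.} On $\mathcal I_t$ every point lies in some $Q_{l(t)(1-1/t)}^{\ssup i}$, so Lemma~\ref{uniform} supplies a uniform almost-sure bound $u(t,x)-\widetilde u(t,x)\le\delta(t)$ with $\delta(t)\to0$. Hence $\sum_{x\in\mathcal I_t}[u-\widetilde u]\le|Q_{L_{\alpha}(t)}|\,\delta(t)$, and because the bound in Lemma~\ref{uniform} decays super-exponentially (its exponent contains the term $-|Q_{l(t)/t}|\log(\cdots)$, which dominates both $\log|Q_{L_{\alpha}(t)}|$ and $\log B_{\alpha}(t)$), the ratio $|Q_{L_{\alpha}(t)}|\,\delta(t)/B_{\alpha}(t)\to0$ almost surely. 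The same bound, after taking expectations, disposes of the centring terms restricted to $\mathcal I_t$.

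\textbf{Boundary.} This is the main obstacle, since on $\mathcal I_t^c$ there is no pointwise control: when the potential peak driving $u(t,x)$ sits in a neighbouring subbox, $u-\widetilde u$ is comparable to $u$ itself. The key structural fact I would exploit is $|\mathcal I_t^c|/|Q_{L_{\alpha}(t)}|\sim d/t\to0$. I would truncate at level $\tau B_{\alpha}(t)$. For the part exceeding $\tau B_{\alpha}(t)$, a union bound combined with the tail estimate of Lemma~\ref{alpha} gives $\mathbf P\big(\exists\,x\in\mathcal I_t^c:u(t,x)>\tau B_{\alpha}(t)\big)$ of order $(d/t)\tau^{-\alpha}\to0$, so this part is negligible in probability. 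For the truncated part I would invoke the moment asymptotics of Proposition~\ref{momente}: the relevant full-box truncated moment is $O(1)$, so weighting it by the boundary fraction $d/t$ forces the contribution to vanish as $t\to\infty$, after which one lets $\tau\to0$.

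The three cases differ only in the centring, matching the three regimes of Condition~P. For $\alpha\in(0,1)$ no centring is used and a truncated first moment suffices; crucially, the \emph{untruncated} first moment cannot be used, as it diverges relative to $B_{\alpha}(t)$ when $\alpha<1$ by the heavy tail of Lemma~\ref{alpha}, which is precisely why truncation is forced. For $\alpha\in(1,2)$ one centres at the mean and controls a truncated second moment (Proposition~\ref{momente}(i) with $p=2>\alpha$), the variance of the boundary sum being again a $d/t$-fraction of the $O(1)$ quantity that produces $\sigma^2$, and the independence of the $u^{\ssup i}$ across subboxes keeping the variance of the sum additive. The delicate borderline is $\alpha=1$, where the truncated-mean centring, the $\alpha\log\tau$ asymptotics of Proposition~\ref{momente}(iii), and the $x/(1+x^2)$ compensator of Condition~P must be balanced against one another before the final limit $\tau\to0$ is taken.
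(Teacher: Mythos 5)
Your skeleton -- splitting $Q_{L_\alpha(t)}$ into the interior $\mathcal I_t$ and the boundary layer $\mathcal I_t^c$, killing the interior with Lemma~\ref{uniform}, and exploiting the vanishing boundary fraction $|\mathcal I_t^c|/|Q_{L_\alpha(t)}|\sim d/t$ -- is exactly the paper's. The genuine gap is in how you treat $\mathcal I_t^c$. Lemma~\ref{alpha} and Proposition~\ref{momente} are statements about $\mu_t$, i.e.\ about the law of the Dirichlet subbox sums $\sum_{x\in Q_{l(t)}^{\ssup i}}\widetilde u(t,x)=\mathrm e^{t\mu_t^{\ssup i}}$; they give no tail or truncated-moment information about the single-site values $u(t,x)$, $x\in\mathcal I_t^c$, of the \emph{full} solution -- which is precisely the object you must control, and which (as you yourself point out) is driven by peaks in neighbouring subboxes and therefore cannot be reduced to the subbox quantities. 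Quantitatively, if one reads Lemma~\ref{alpha} as a per-site tail one gets $\mathbf P\big(u(t,x)>\tau B_\alpha(t)\big)\approx\frac{|Q_{l(t)}|}{|Q_{L_\alpha(t)}|}\tau^{-\alpha}$, and the union bound over the $|\mathcal I_t^c|\approx\frac dt|Q_{L_\alpha(t)}|$ boundary sites gives order $\frac dt|Q_{l(t)}|\tau^{-\alpha}\to\infty$, not the claimed $\frac dt\tau^{-\alpha}$; the per-site tail of order $\tau^{-\alpha}/|Q_{L_\alpha(t)}|$ that your count presupposes follows from nothing you cite. The paper's actual argument supplies it in one stroke and makes your truncation superfluous: since $B_\alpha(t)^\alpha\approx|Q_{L_\alpha(t)}|\left<u(t,0)^\alpha\right>$ and $x\mapsto x^\alpha$ is subadditive for $\alpha\le1$, Markov's inequality with the $\alpha$-th \emph{annealed} moment gives
\[
\mathbf P\Bigg(\frac1{B_\alpha(t)}\sum_{x\in\mathcal I_t^c}u(t,x)>\varepsilon\Bigg)
\le\frac{\Big<\big(\sum_{x\in\mathcal I_t^c}u(t,x)\big)^\alpha\Big>}{\varepsilon^\alpha B_\alpha(t)^\alpha}
\le\frac{1}{\varepsilon^\alpha}\,\frac{|\mathcal I_t^c|\left<u(t,0)^\alpha\right>}{|Q_{L_\alpha(t)}|\left<u(t,0)^\alpha\right>}
\longrightarrow0,
\]
so only annealed moments of $u$ are needed, never its distribution.

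Two further points. For $\alpha\in[1,2)$ your appeal to ``independence of the $u^{\ssup i}$ keeping the variance additive'' is not available: the boundary sum is built from $u$, not from the $u^{\ssup i}$, and the layers of adjacent subboxes are strongly correlated (one peak near a shared face inflates both). The paper instead bounds the $\alpha$-th moment of the centred boundary sum by expanding the square inside the power $\alpha/2\le1$, splitting pairs $(x,y)$ at distance $l(t)/t$, counting the close pairs, and decoupling the far pairs by the two-random-walk estimate from the proof of Lemma~\ref{h4t}; any repair of your cases ii) and iii) has to go through such a decorrelation step. Finally, your interior justification overreaches: the claim that $-|Q_{l(t)/t}|\log(\cdots)$ dominates $\log|Q_{L_\alpha(t)}|$ fails, e.g., for Weibull tails $\varphi(x)=x^2$ in $d=1$, where $\log|Q_{L_\alpha(t)}|$ grows like $t^2$ but the negative exponent in Lemma~\ref{uniform} only like $t\log^3t$. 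The conclusion you need survives for the paper's reason -- $|\mathcal I_t|<B_\alpha(t)$, so that $\delta(t)\to0$ alone suffices -- not for yours.
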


\begin{proof}
\begin{enumerate}
\item 
From Lemma \ref{uniform} and the fact that $|\mathcal{I}_t|<B_\alpha(t)$ for all $t$ it follows that for $t\to\infty$,
\begin{equation*}
\mathbf{P}\Bigg(\frac{1}{B_{\alpha}(t)}\sum_{x\in Q_{L_{\alpha}(t)}} u(t,x)-\widetilde{u}(t,x)>\varepsilon\Bigg)
\sim \mathbf{P}\Bigg(\frac{1}{B_{\alpha}(t)}\sum_{x\in \mathcal{I}_t^c} u(t,x)-\widetilde{u}(t,x)>\varepsilon\Bigg).
\end{equation*}
By the definition of $B_{\alpha}(t)$ and by Markov's inequality it follows that
\begin{align*}
 \mathbf{P}\Bigg(\frac{1}{B_{\alpha}(t)}\sum_{x\in \mathcal{I}_t^c} u(t,x)-\widetilde{u}(t,x)>\varepsilon\Bigg)\leq&\mathbf{P}\Bigg(\sum_{x\in\mathcal{I}_t^c }\frac{u(t,x)}{|Q_{L_{\alpha}(t)}|^{1/\alpha}\left<u(t,0)^\alpha\right>^{1/\alpha}}>\varepsilon\Bigg)\\
\leq&\frac{1}{\varepsilon^\alpha}\frac{\Bigg<\bigg(\sum\limits_{x\in \mathcal{I}_t^c}u(t,x)\bigg)^\alpha\Bigg>}{|Q_{L_{\alpha}(t)}|\left<u(t,0)^\alpha\right>}\\
\leq&\frac{1}{\varepsilon^\alpha}\frac{|Q_{L_{\alpha}(t)}|}{|\mathcal{I}_t|}\frac{\left< u(t,0)^\alpha\right>}{|Q_{L_{\alpha}(t)}|\left<u(t,0)^\alpha\right>}\stackrel{t\to\infty}{\longrightarrow} 0.
\end{align*}
\item Similarly as in case $i)$ we find that asymptotically
\begin{align*}
 &\mathbf{P}\Bigg(\frac{1}{B_{\alpha}(t)}\sum_{x\in Q_{L_{\alpha}(t)}}\big[u(t,x)-\left<u(t,x)\right>-\widetilde{u}(t,x)+\left<\widetilde{u}(t,x)\right>\big]>\varepsilon\Bigg)\\
\leq&\frac{1}{\varepsilon^\alpha}\frac{\Bigg<\bigg(\sum\limits_{x\in \mathcal{I}_t^c}u(t,x)\bigg)^\alpha\Bigg>}{|Q_{L_{\alpha}(t)}|\left<u(t,0)^\alpha\right>}+o(1).
\end{align*}
Furthermore, we have
\begin{align*}
 &\Bigg<\bigg(\sum\limits_{x\in \mathcal{I}_t^c}u(t,x)\bigg)^\alpha\Bigg>\\
\leq&\left<\bigg(\sum\limits_{x\in \mathcal{I}_t^c}\Big(u(t,x)^2+\sum\limits_{\substack{y\in \mathcal{I}_t^c:\\|x-y|\leq l(t)/t}}u(t,x)u(t,y)+\sum\limits_{\substack{y\in \mathcal{I}_t^c:\\|x-y|> l(t)/t}}u(t,x)u(t,y)\Big)\bigg)^{\alpha/2}\right>\\
\leq& \sum\limits_{x\in \mathcal{I}_t^c}|l(t)(1-1/t)|\left<u(t,x)^\alpha\right>+\sum_{\substack{x,y\in \mathcal{I}_t^c:\\|x-y|>l(t)/t}}\left<\left(u(t,x)u(t,y)\right)^{\alpha/2}\right>.
\end{align*}
The first summand can be treated as in case i) whereas the second summand can be treated similarly as in the proof of Lemma~\ref{h4t}.
\item follows analogously.
\end{enumerate}
\end{proof}

\noindent
Now we are able to prove Theorem~\ref{stablemain}.

\begin{proof}[Proof of Theorem~\ref{stablemain}]
 We only consider the case $\alpha\in(0,1)$. The other cases follow similarly. It follows from Lemma~\ref{cut} that for every $\varepsilon>0$,
\[\lim_{t\to\infty}\mathbf{P}\Bigg(\sum\limits_{i\colon Q_{l(t)}^{\ssup i}\subset Q_{L_{\alpha}(t)}}\Bigg|\frac{\sum\limits_{x\in Q_{{l}(t)}^{\ssup i}}u(t,x)-\exp\{t\mu_t^{\ssup i}\}}{B_{\alpha}(t)}\Bigg|>\varepsilon\Bigg)=0,\]
while Theorem~\ref{mu} states that under the same conditions as in Theorem~\ref{stablemain},
\[\sum\limits_{i\colon Q_{l(t)}^{\ssup i}\subset Q_{L_{\alpha}(t)}}\frac{\mathrm{e}^{t\mu_t^{\ssup i}}}{B_{\alpha}(t)}\stackrel{t\to\infty}{\Longrightarrow}\mathcal{F_\alpha}.\]
Therefore, the claim follows from Slutzky's theorem.
\end{proof}

\begin{remark}
 We expect that a similar result as Theorem~\ref{stablemain} with the same stable limit distribution also holds for potential tails that are bounded from above as considered in \cite{BK01} and \cite{HKM06}. However, since in that case we do not have such a close link between between $\mu_t$ and $\xi^{\ssup 1}_{Q_{l(t)}}$ we cannot determine the distribution of $\mu_t$ and therefore $L_{\alpha}(t)=-\log\mathbf{P}(\mu_t>\widetilde{h}_t)$ cannot be made as explicit as under Assumption F.
\end{remark}

\section{Strong law of large numbers}\label{Strong law of large numbers}

Recall that $l(t)=\max\{t^2\log^2 t, H(4t)\}$ and that $x+Q_{l(t)}$ is the lattice box with centre $x$ and sidelength $l(t)$.

\begin{lemma}\label{h4t}
Let Assumption H be satisfied and $r(t)$ be chosen as in Theorem~\ref{slln}, then
\[\lim_{t\to\infty}\frac{1}{|Q_{r(t)}|^2}\sum_{\substack{x,y\in Q_{r(t)}:\\|x-y|>2l(t)}}\bigg(\frac{\left<u(t,x)u(t,y)\right>}{\left<u(t,0)\right>^2}-1\bigg)=0.\]
\end{lemma}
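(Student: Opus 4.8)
The plan is to pass to the Feynman--Kac / local-time representation, in which the annealed correlations factorise explicitly through the cumulant generating function $H$, and then to show that the only mechanism producing correlation between $u(t,x)$ and $u(t,y)$ --- two Feynman--Kac paths visiting a common site --- is super-exponentially improbable once $|x-y|>2l(t)$.

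First I would reduce to the centred case. By stationarity of the i.i.d.\ field, $\left<u(t,x)u(t,y)\right>$ depends only on $z=x-y$, so it suffices to bound $\left<u(t,0)u(t,z)\right>/\left<u(t,0)\right>^2-1$ uniformly over $|z|>2l(t)$; since the number of admissible pairs is at most $|Q_{r(t)}|^2$, such a uniform bound immediately controls the normalised sum. Writing $\ell_x=\int_0^t\mathbbm{1}\{X_s=x\}\,\mathrm{d}s$ and $m_x=\int_0^t\mathbbm{1}\{Y_s=x\}\,\mathrm{d}s$ for the local times of two independent walks $X,Y$, the Feynman--Kac formula, Tonelli, and the independence of $\xi$ give
\[
\left<u(t,0)u(t,z)\right>=\E_0\otimes\E_z\exp\Big\{\sum_x H(\ell_x+m_x)\Big\},\qquad \left<u(t,0)\right>^2=\E_0\otimes\E_z\exp\Big\{\sum_x\big(H(\ell_x)+H(m_x)\big)\Big\},
\]
the second identity using that the two walks are independent and that $\left<u(t,z)\right>=\left<u(t,0)\right>$.

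The next step records two consequences of the convexity of $H$ together with $H(0)=0$, which make $H$ superadditive on $[0,\infty)$. Pointwise in $(X,Y)$ one has $\sum_x H(\ell_x+m_x)\ge\sum_x(H(\ell_x)+H(m_x))$, so the first integrand dominates the second; hence $\left<u(t,0)u(t,z)\right>\ge\left<u(t,0)\right>^2$ and every summand in the lemma is nonnegative. Moreover the two integrands coincide unless some site $w$ carries $\ell_w>0$ and $m_w>0$, i.e.\ unless the two ranges intersect; and superadditivity also yields the crude bound $\sum_x H(\ell_x+m_x)\le H\big(\sum_x(\ell_x+m_x)\big)=H(2t)$, since $\sum_x\ell_x=\sum_x m_x=t$. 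Consequently
\[
0\le \left<u(t,0)u(t,z)\right>-\left<u(t,0)\right>^2\le \mathrm{e}^{H(2t)}\,\P\big(\mathrm{range}(X)\cap\mathrm{range}(Y)\neq\emptyset\big).
\]

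It remains to estimate the intersection probability and to check that the exponential prefactor does not spoil the decay. If the ranges meet at a site $w$, then $|w-0|+|w-z|\ge|z|>2l(t)$, so at least one walk leaves the box $Q_{l(t)}$ centred at its starting point before time $t$; thus the probability above is at most $2\P_0(\tau_{(Q_{l(t)})^c}\le t)$. By \cite[Lemma 2.5 and Corollary 2.7]{GM98} this is bounded by $\exp\{-c\,l(t)\log(l(t)/t)\}$ for some $c>0$ and all large $t$. Dividing by $\left<u(t,0)\right>^2$ and invoking \eqref{GM98} for $p=1,2$ turns the estimate into $\exp\{H(2t)-2H(t)+2\chi t+o(t)-c\,l(t)\log(l(t)/t)\}$. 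Here the choice $l(t)=\max\{t^2\log^2 t,H(4t)\}$ is exactly what is needed: since $H(t)\ge0$ for large $t$ one has $H(2t)-2H(t)\le H(2t)\le H(4t)\le l(t)$, while $2\chi t=o(l(t))$ and $\log(l(t)/t)\to\infty$, so the exponent tends to $-\infty$ uniformly in $z$. This gives $\sup_{|z|>2l(t)}\big(\left<u(t,0)u(t,z)\right>/\left<u(t,0)\right>^2-1\big)\to0$ and hence the claim. The main obstacle is precisely this last balance: the integrand on the intersection event can be as large as $\mathrm{e}^{H(2t)}$, so a naive bound would fail for heavy tails, and it is the built-in comparison $l(t)\ge H(4t)$ that forces the super-exponential smallness of the exit probability to dominate $\mathrm{e}^{H(2t)}$. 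I would therefore keep the exit estimate in the sharp form $l(t)\log(l(t)/t)$ rather than $l(t)$ alone, since for $\rho=\infty$ tails (e.g.\ Weibull) $H(4t)$ and $l(t)$ are of the same order and only the extra logarithmic factor produces the decay.
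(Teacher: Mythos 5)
Your proof is correct, and it takes a genuinely different route from the paper's. The paper decomposes each factor via Dirichlet restriction: it introduces $u^{\ssup 1}(t,x)$ (paths confined to $x+Q_{l(t)}$) and the remainder $u(t,x,y)$ (pairs of paths of which at least one exits), uses that $u^{\ssup 1}(t,x)$ and $u^{\ssup 1}(t,y)$ are independent when $|x-y|>2l(t)$, and then controls $\left<u(t,x,y)\right>$ by Cauchy--Schwarz, which separates a fourth-moment factor $\sqrt{\left<u(t,0)^4\right>}\approx \mathrm{e}^{H(4t)/2}$ from the square root of the exit probability; this is precisely why $l(t)\geq H(4t)$ appears in the definition of $l(t)$. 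You instead work entirely on the annealed side via the local-time identity, and replace both the independence argument and Cauchy--Schwarz by superadditivity of the convex function $H$ with $H(0)=0$: the integrands for $\left<u(t,0)u(t,z)\right>$ and $\left<u(t,0)\right>^2$ coincide off the range-intersection event, and on that event the difference is at most $\mathrm{e}^{H(2t)}$, which together with the same GM98 exit estimate and $H(2t)\leq H(4t)\leq l(t)$, $\log\big(l(t)/t\big)\to\infty$ closes the argument. Your approach buys two things the paper's does not make explicit: every summand is nonnegative (positive correlations, so the average is genuinely controlled by the supremum), and only $H(2t)$ rather than $H(4t)$ is needed, so the argument would tolerate a slightly smaller $l(t)$. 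The paper's approach, in turn, recycles machinery (Dirichlet boxes, moment asymptotics from GM90/GM98) already used elsewhere in the text and avoids re-deriving the local-time factorisation. Both proofs ultimately hinge on the same balance: the super-exponential smallness $\exp\{-c\,l(t)\log(l(t)/t)\}$ of the exit probability beating an $\mathrm{e}^{O(l(t))}$ prefactor, and your closing remark about the Weibull case correctly identifies the logarithmic factor as the decisive one there. One cosmetic note: in your final step only \eqref{GM98} with $p=1$ is actually invoked (to write $\left<u(t,0)\right>^2=\exp\{2H(t)-2\chi t+o(t)\}$); the $p=2$ case is not needed, since the $H(2t)$ in your exponent comes from the superadditivity bound, not from a second moment.
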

\begin{proof}
For $t>0$ and $x\in Q_{r(t)}$ let
\[u^{\ssup 1}(t,x)=\E_x\exp\Bigg\{\int\limits_0^\infty \xi(X_s)\,\text{d}s\Bigg\}\mathbbm{1}_{\tau_{(x+Q_{l(t)})^c}\geq t},\]
and
\[u(t,x,y)=\E_{x,y}\exp\Bigg\{\int\limits_0^\infty \xi(X_s)\,\text{d}s\Bigg\}\exp\Bigg\{\int\limits_0^\infty \xi(Y_s)\,\text{d}s\Bigg\}\mathbbm{1}_{\tau^X_{(x+Q_{l(t)})^c}<t\mbox{ or }\tau^Y_{(y+Q_{l(t)})^c}<t},\]
where $X$ and $Y$ are two independent random walks starting in $x$, $y$, respectively, $\E_{x,y}$ is their joint expectation, and $\tau^X_{A}$, $\tau^Y_{A}$ are their exit times from a set $A\subset \Z^d$, respectively.  
If $|x-y|>2l(t)$ then $u^{\ssup 1}(t,x)$ and $u^{\ssup 1}(t,y)$ are independent, and hence
\[
  \sum_{\substack{x,y\in Q_{r(t)}:\\|x-y|>2l(t)}}\bigg(\frac{\left<u(t,x)u(t,y)\right>}{\left<u(t,0)\right>^2}-1\bigg)
=\sum_{\substack{x,y\in Q_{r(t)}:\\|x-y|>2l(t)}}\frac{\left<u(t,x,y)\right>}{\left<u(t,0)\right>^2}.
\]
H\"older's inequality and \cite[Lemma 2.4 and Theorem 3.1]{GM90} yield for all $x,y\in Q_{r(t)}\setminus Q_{l(t)}$,
 \begin{align*}
\left<u(t,x,y)\right>\leq&\sqrt{\left<u(t,0)^4\right>2\P_x
\left(\tau_{(x+Q_{l(t)})^c}<t\right)}\\
\leq&\exp\left\{\frac{1}{2}(l(t)-l(t)\log l(t)+o(t)\right\}\stackrel{t\to\infty}{\longrightarrow} 0. 
\end{align*}
\end{proof}

\begin{proof}[Proof of Theorem~\ref{slln}]
By Chebyshev's inequality we find that for every $s>0$, 
\begin{align*}
 &\mathbf{P}\bigg(\sup_{t_n>s}\frac{1}{|Q_{r(t_n)}|}\sum_{x\in Q_{r(t_n)}}\Big(\frac{u(t_n,x)}{\left<u(t_n,0)\right>}-1\Big)>\varepsilon\bigg)\\
\leq&\sum_{t_n>s}\frac{1}{\varepsilon^2}\text{Var}\bigg(\frac{1}{|Q_{r(t_n)}|}\sum_{x\in Q_{r(t_n)}}\Big(\frac{u(t_n,x)}{\left<u(t_n,0)\right>}-1\Big)\bigg).
\end{align*}
As $t$ tends to infinity it follows with Lemma~\ref{h4t} that
\begin{align*}
 \text{Var}\bigg(\frac{1}{|Q_{r(t)}|}\sum_{x\in Q_{r(t)}}\Big(\frac{u(t,x)}{\left<u(t,0)\right>}-1\Big)\bigg)
&\sim\frac{1}{|Q_{r(t)}|^2}\sum_{\substack{x,y\in Q_{r(t)}:\\|x-y|<2l(t)}}\bigg(\frac{\left<u(t,x)u(t,y)\right>}{\left<u(t,0)\right>^2}-1\bigg)\\
&\sim\frac{1}{|Q_{r(t)}|}\sum_{x\in Q_{l(t)}}\bigg(\frac{\left<u(t,0)u(t,x)\right>}{\left<u(t,0)\right>^2}-1\bigg)\\
&\leq\frac{|Q_{l(t)}|}{|Q_{r(t)}|}\frac{\left<u(t,0)^2\right>}{\left<u(t,0)\right>^2}\\
&=\exp\{-\log|Q_{r(t)}|+H(2t)-2H(t)+o(t)\}.
\end{align*}
The last asymptotics are due to \eqref{GM98}. Now the claim follows because for our choice of $r(t)$,
\[\lim\limits_{s\rightarrow\infty}\sum_{t_n>s}\exp\{-\log|Q_{r(t_n)}|+H(2t_n)-2H(t_n)+o(t_n)\}=0.
\]
\end{proof}

\subsection*{Acknowledgement}
We would like to thank Wolfgang K\"onig for various helpful comments on the first draft of this paper.

\end{document}